\numberwithin{equation}{section}
\newtheorem{theorem}{Theorem}[section]
\newtheorem{lemma}[theorem]{Lemma}
\newtheorem{corollary}[theorem]{Corollary}
\newtheorem{quest}[theorem]{Question}
\newtheorem{proposition}[theorem]{Proposition}
\theoremstyle{definition}
\newtheorem{definition}[theorem]{Definition}
\newtheorem{problem}[theorem]{Problem}
\renewcommand{\wr}{\mathop{\mathrm{wr}}}
\newcommand{\Sym}{\mathop{\mathrm{Sym}}}
\begin{document}
\title[On the order of Borel subgroups of group amalgams]{On the order of Borel subgroups of group
amalgams and an application to locally-transitive graphs}

\author[L. Morgan, P. Spiga, G. Verret]{Luke Morgan, Pablo Spiga, Gabriel Verret}

\address{Luke Morgan,  School of Mathematics and Statistics, 
\newline\indent University of Western Australia, 35 Stirling Highway, Crawley, WA 6009, Australia.} 
\email{luke.morgan@uwa.edu.au}

\address{Pablo Spiga,  Dipartimento di Matematica Pura e Applicata, 
\newline\indent University of Milano-Bicocca, Milano, 20125 Via Cozzi 55, Italy.} 
\email{pablo.spiga@unimib.it}

\address{Gabriel Verret,  School of Mathematics and Statistics, 
\newline\indent University of Western Australia, 35 Stirling Highway, Crawley, WA 6009, Australia.
\newline\indent Also affiliated with: FAMNIT, University of Primorska, 
\newline\indent Glagolja\v{s}ka 8, SI-6000 Koper, Slovenia.}
\email{gabriel.verret@uwa.edu.au}

\thanks{The research of the first author is supported by the Australian Research Council grant
DP120100446. The last author is supported by UWA as part of the ARC grant DE130101001.}

\subjclass[2010]{Primary 20B25; Secondary 05E18}
\keywords{locally-restrictive, locally-transitive graph, semiprimitive, group amalgams}

\begin{abstract}
A permutation group is called \emph{semiprimitive} if each of its normal subgroups is either
transitive or semiregular. Given nontrivial finite transitive permutation groups $L_1$ and $L_2$
with $L_1$ not semiprimitive, we construct an infinite family of rank two amalgams of permutation
type $[L_1,L_2]$ and Borel subgroups of strictly increasing order. As an application, we show that
there is no bound on the order of edge-stabilisers in locally $[L_1,L_2]$ graphs.

We also consider the corresponding question for amalgams of rank $k\geq 3$. We completely resolve
this by showing that the order of the Borel subgroup is bounded by the permutation type
$[L_1,\dots,L_k]$ only in the trivial case where each of $L_1,\dots,L_k$ is regular.
\end{abstract}

\maketitle

\section{Introduction}
All graphs in this paper  are connected, simple and locally finite. Let $\Gamma$ be a graph, let $v$
be a vertex of $\Gamma$ and let $G$ be a group of automorphisms of $\Gamma$. We denote by
$\Gamma(v)$ the neighbourhood of $v$, by $G_v$ the stabiliser of $v$ in $G$, and by
$G_v^{\Gamma(v)}$ the permutation group induced by $G_v$ on $\Gamma(v)$. We say that $\Gamma$ is
$G$-locally-transitive if $G_v^{\Gamma(v)}$ is transitive for every vertex $v$ of $\Gamma$. (This is
easily seen to imply that $G$ is transitive on the edges of $\Gamma$.)

The starting point for our investigations is a classical result of Goldschmidt~\cite{Goldschmidt}, a
consequence of which states that in a finite $G$-locally-transitive graph of valency three, the
edge-stabilisers have order dividing $128$. Inspired by this result, we introduce the following
terminology.

Let $L_1$ and $L_2$ be finite transitive permutation groups, let $[L_1,L_2]$ denote the multiset
containing $L_1$ and $L_2$  and let $\Gamma$ be a  $G$-locally-transitive graph. We say that
$(\Gamma,G)$ is \emph{locally} $[L_1,L_2]$ if, for some edge $\{u,v\}$ of $\Gamma$, we have
permutation isomorphisms $G_u^{\Gamma(u)}\cong L_1$ and $G_v^{\Gamma(v)}\cong L_2$.

\begin{definition}
The multiset $[L_1,L_2]$ is  \emph{locally-restrictive} if there exists a constant $c \in \mathbb N$
such that, if $\Gamma$ is a finite  $G$-locally-transitive graph with $(\Gamma,G)$ locally
$[L_1,L_2]$ and $\{u,v\}$ is an edge of $\Gamma$, then $|G_{uv}|\leq c$.
\end{definition}

With this terminology, Goldschmidt's result implies that, if $L_1$ and $L_2$ are transitive
permutation groups of degree three then $[L_1,L_2]$ is locally-restrictive. A related conjecture of
Goldschmidt-Sims states that  if $L_1$ and $L_2$ are both primitive permutation groups then
$[L_1,L_2]$ is locally-restrictive. Whilst there has been some progress on the Goldschmidt-Sims
Conjecture (see \cite{fan,fan2,glasner,morgan,rowleytrees,stelledgetrans}), it remains open.
Although the focus of the Goldschmidt-Sims Conjecture is on primitive permutation groups, it is
still possible for $[L_1,L_2]$ to be locally-restrictive even when neither $L_1$ nor $L_2$ is
primitive. For example, it is easy to see that if $L_1$ and $L_2$ are both regular permutation
groups then $[L_1,L_2]$ is locally-restrictive. We therefore pose the following problem.

\begin{problem}\label{problem:main}
Determine which pairs of finite transitive permutation groups are locally-restrictive.
\end{problem}

Our main result, Theorem~\ref{theo:graph}, is a significant step towards solving
Problem~\ref{problem:main}.

\begin{theorem}
\label{theo:graph}
Let $L_1$ and $L_2$ be nontrivial finite transitive permutation groups. If one of $L_1$ or $L_2$ is
not semiprimitive then $[L_1,L_2]$ is not locally-restrictive.
\end{theorem}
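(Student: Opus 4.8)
The plan is to pass through group amalgams. A rank-two amalgam $\mathcal{A}=(G_1,G_2,B)$ of permutation type $[L_1,L_2]$ (so $B=G_1\cap G_2$, the action of $G_i$ on $G_i/B$ is permutation isomorphic to $L_i$, and $B$ is the Borel subgroup) has a universal completion $G=G_1\ast_B G_2$ acting on the associated biregular Bass--Serre tree with local actions $L_1$, $L_2$ and edge-stabiliser $B$. Since a free product of finite groups amalgamated over a common subgroup is residually finite, $G$ has finite quotients $\bar G$ in which $G_1$ and $G_2$ (hence $B$) embed; the corresponding coset graph is then a finite $\bar G$-locally-transitive graph with $(\Gamma,\bar G)$ locally $[L_1,L_2]$ and edge-stabiliser isomorphic to $B$. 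Thus it suffices to construct, for the given $L_1,L_2$, an infinite family of amalgams of type $[L_1,L_2]$ whose Borel subgroups have strictly increasing order; this is exactly the statement advertised in the abstract, and Theorem~\ref{theo:graph} follows.

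Since being locally-restrictive is symmetric in the multiset $[L_1,L_2]$, I may assume that $L_1$ is not semiprimitive and fix a normal subgroup $N\trianglelefteq L_1$ that is intransitive and not semiregular. Let $\Omega_1$ be the domain of $L_1$, choose $\omega\in\Omega_1$ with $N_\omega\neq 1$ (possible since $N$ is not semiregular and $L_1$ is transitive), and note that the orbit $\omega^N$ is a proper subset of $\Omega_1$ because $N$ is intransitive. Write $P=(L_1)_\omega$ for the point-stabiliser playing the role of the Borel of $L_1$. The two features to exploit are complementary: the nontrivial element of $N_\omega\leq P$ will seed a nontrivial edge-kernel, while the intransitivity of $N$ is what prevents this kernel from being forced to grow ``transitively'' and collapse.

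The construction itself is recursive and wreath-theoretic. For each $k$ I would build vertex groups $G_1^{(k)},G_2^{(k)}$ sharing a common Borel $B^{(k)}$ by repeatedly pushing copies of $N_\omega$ one level deeper into the tree: concretely, one takes $G_1^{(k)}$ to be an extension of $L_1$ whose kernel $G_1^{(k),[1]}$ (the part acting trivially on $\Omega_1$) is a suitable subgroup of an iterated wreath product $N_\omega\wr\cdots$ built from $N_\omega$ along the branches emanating from the vertex, and one does the analogous thing on the $L_2$-side, arranging the identifications so that $B^{(k)}$ is simultaneously the relevant stabiliser in both $G_1^{(k)}$ and $G_2^{(k)}$. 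Each extra level multiplies the order of the edge-kernel by a fixed factor coming from $N_\omega\neq 1$, so $|B^{(k)}|$ strictly increases with $k$, while the induced actions on $\Omega_1$ and $\Omega_2$ remain $L_1$ and $L_2$.

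The main obstacle is precisely the verification that this recursion does not collapse. One must check that at each stage the amalgam really has permutation type $[L_1,L_2]$ (the local actions are unchanged and the indices $[G_i^{(k)}:B^{(k)}]$ stay equal to the degrees of $L_1,L_2$), that the Borel genuinely grows (the successive kernel layers remain nontrivial rather than being killed), and that the amalgam is faithful, i.e.\ the core of $B^{(k)}$ in the completion is trivial, so that the associated graph is a tree of the correct valency. The intransitivity and non-semiregularity of $N$ are exactly the hypotheses that make these checks go through, and this is where the semiprimitive case behaves differently, since for a semiprimitive $L_1$ the analogous layers would be forced to be transitive or semiregular and the kernel could not be grown. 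I expect this non-collapse verification, together with the bookkeeping needed to make the common Borel $B^{(k)}$ embed compatibly in both vertex groups, to be the technical heart of the argument.
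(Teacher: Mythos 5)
Your first paragraph, reducing the theorem to the existence of faithful rank two amalgams of permutation type $[L_1,L_2]$ with unbounded Borel subgroups, is correct and is exactly the paper's route (its Section~2, via Bass--Serre theory and Baumslag's residual finiteness theorem). One caveat even there: to make the finite quotient work you need more than that $G_1$, $G_2$ and $B$ embed in $\bar G$; you must choose the finite-index normal subgroup $R$ to miss the whole set $G_1G_2G_1\cup G_2G_1G_2$, otherwise distinct neighbours of a vertex in the tree can be identified in the quotient and the local action degenerates. This is a standard fix, but your stated condition is not sufficient as written.

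The genuine gap is in the construction itself. What you give is a statement of intent --- ``push copies of $N_\omega$ one level deeper,'' ``arrange the identifications so that $B^{(k)}$ embeds in both vertex groups'' --- followed by an explicit admission that the non-collapse verification and the compatibility bookkeeping are ``the technical heart of the argument'' and are not supplied. That heart is the entire content of the theorem. In the paper the vertex group $P_1$ is a concrete subgroup $A\rtimes R_1$ of $(L\ltimes (L_\lambda^{\Delta})^m)\rtimes R_1$ cut out by a cocycle-type condition relative to a transversal of $S_\delta$ in $S$ (where $S=L/K$ acts on the set $\Delta$ of $K$-orbits), the group $P_2$ is $C\rtimes R$ for an explicit subgroup $C\le A$, and the Borel is $C\rtimes R_1$ of order at least $|K_\lambda|^{|\Delta|\ell m_2}$. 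Faithfulness is then proved by a delicate induction (Claims~1--4 of Proposition~3.10) that exploits a deliberate index offset between the two coordinate blocks $(g_0,\dots,g_m)$ and $(h_0,\dots,h_{m-1})$ on which $R$ acts, producing a domino argument: any normal subgroup of $B$ normalised by both $P_1$ and $P_2$ is forced to be trivial coordinate by coordinate. None of this is recoverable from your sketch; in particular, you do not explain how the two actions are intertwined so that the common Borel is simultaneously large, normal-core-free, and compatible with both local actions. You have correctly identified the ingredients ($K$ intransitive so $|\Delta|\ge 2$, $K$ not semiregular so $K_\lambda\neq 1$) and correctly located the difficulty, but locating the difficulty is not the same as resolving it, so the proof is incomplete where it matters most.
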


(A permutation group is called \emph{semiregular} if the identity is the only element of the group
that fixes a point and  \emph{semiprimitive} if each of its normal subgroups is either transitive or
semiregular.) In view of Theorem~\ref{theo:graph}, we are naturally led to pose the following
question, the answer to which we believe to be positive.

\begin{quest}
\label{question}
Does the converse of Theorem~$\ref{theo:graph}$ hold? In other words, if $L_1$ and $L_2$ are finite
transitive semiprimitive permutation groups, is $[L_1,L_2]$ locally-restrictive?
\end{quest}

Our notion of locally-restrictive is to locally-transitive graphs what the notion of
graph-restrictive (see \cite[Definition 2]{PSVRestrictive}) is to arc-transitive graphs. Many of the
concepts and results we have discussed so far have well-known analogues in the arc-transitive case.
For example, Goldschmidt's Theorem can be seen as the locally-transitive version of Tutte's famous
result on arc-transitive graphs of valency three~\cite{Tutte,Tutte2}. Similarly,  the
Goldschmidt-Sims Conjecture corresponds to the long-standing Weiss Conjecture~\cite{Weiss} 
 which asserts that primitive groups are graph-restrictive. The recent Poto\v{c}nik-Spiga-Verret
Conjecture \cite[Conjecture~3]{PSVRestrictive}  asserts that a permutation group is
graph-restrictive if and only if it is semiprimitive. Remarkable evidence towards this conjecture
can be found in~\cite{PabloGab}, where the intransitive case is dealt with. For recent progress on
the transitive case,  see \cite{giudicimorgan,giudicimorgan2,PPSS,PraegerSV,Pablo}.  

We remark that an affirmative answer to Question~\ref{question} would show the validity of both the
Weiss and Poto\v{c}nik-Spiga-Verret  Conjectures. In fact, these conjectures can easily be rephrased
using our terminology. Indeed, if $(\Gamma,G)$ is locally $[L,\mathrm C_2]$ (where $C_2$ denotes the
cyclic group of order $2$) 
then $ \Gamma $ is simply the barycentric subdivision of a graph $\tilde \Gamma$ on which $G$  acts
faithfully and arc-transitively and such that $(\tilde \Gamma,G)$ is locally $[L,L]$. There is an obvious converse to this procedure,  thus $[L,\mathrm C_2]$ is locally-restrictive if and only if $L$ is graph-restrictive.

%, Problem~\ref{problem:main} to~\cite[Problem~1.3]{Verret}, Theorem~\ref{theo:graph} to
%\cite[Theorem~4]{PSVRestrictive} and Question~\ref{question}
%to~\cite[Conjecture~3]{PSVRestrictive}. Remarkable evidence
%towards~\cite[Conjecture~3]{PSVRestrictive} can be found in~\cite{PabloGab}, where the intransitive
%case is dealt with. For recent progress on the transitive case,  see
%\cite{giudicimorgan,PPSS,PraegerSV,Pablo}. 
%
%
The theory of groups acting on trees due to Bass-Serre allows us to interpret
Question~\ref{question}  in terms of  locally-transitive discrete subgroups of the automorphism
group of a bi-regular tree. This equivalence will be proved in Section~\ref{sec:graph}. For now, let
us simply point out that, under this equivalence,  Theorem~\ref{theo:graph} is equivalent to
Theorem~\ref{cor:discsubgroups} below.

\begin{theorem}
\label{cor:discsubgroups}
Let $L_1$ and $L_2$ be nontrivial finite transitive permutation groups and let $\mathfrak{T}$ be the
bi-regular tree with valencies the degrees of $L_1$ and $L_2$. If one of $L_1$ and $L_2$ is not
semiprimitive then, for every integer $c$, there exists a group $G$ of automorphisms of
$\mathfrak{T}$ such that $(\mathfrak{T},G)$ is locally $[L_1,L_2]$  and $c\leq |G_{uv}|<\infty$ for
some edge $\{u,v\}$ of $\mathfrak{T}$.
\end{theorem}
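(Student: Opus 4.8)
The plan is to pass from the tree to amalgams and then to construct, for each $c$, an amalgam of permutation type $[L_1,L_2]$ whose Borel subgroup has order at least $c$. By Bass--Serre theory (set up in Section~\ref{sec:graph}), a group acting on $\mathfrak T$ that is locally $[L_1,L_2]$ with edge $\{u,v\}$ corresponds to an amalgam $G=G_1\ast_B G_2$, where $G_1=G_u$, $G_2=G_v$, $B=G_{uv}$, and for $i=1,2$ the action of $G_i$ on the coset space $G_i/B$ is permutation isomorphic to $L_i$; equivalently $K_i:=\mathrm{core}_{G_i}(B)$ satisfies $G_i/K_i\cong L_i$ with $B/K_i$ a point stabiliser $M_i:=(L_i)_{\alpha_i}$. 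When the action is faithful one has $|G_{uv}|=|B|=|K_1|\,|M_1|=|K_2|\,|M_2|$. Thus it suffices to produce, for each $c$, a \emph{faithful} amalgam of type $[L_1,L_2]$ with $|K_1|$ (equivalently $|B|$) arbitrarily large and finite.

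Concretely, such an amalgam amounts to a finite group $B$ carrying two normal subgroups $K_1,K_2\trianglelefteq B$ with $B/K_i\cong M_i$, together with extensions $1\to K_i\to G_i\to L_i\to 1$ realising $K_i$ as the kernel of the $L_i$-action and $B$ as the preimage of $M_i$. The constraints are that each $K_i$ be normalised by all of $G_i$ (not merely by $B$), and that, to keep the completion faithful, no nontrivial subgroup of $B$ be normal in both $G_1$ and $G_2$ -- in particular $K_1\neq K_2$ as subgroups of $B$. Indeed a common normal subgroup of $G_1$ and $G_2$ would lie in the kernel of the tree action and so could not contribute to $|G_{uv}|$.

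Without loss of generality $L_1$ is not semiprimitive, so there is $N\trianglelefteq L_1$ that is intransitive and not semiregular. Its orbits form a block system $\Delta_1,\dots,\Delta_r$ with $r\geq2$ (intransitivity), permuted transitively by $L_1$, and $N_{\alpha_1}\neq1$ (non-semiregularity), where $\alpha_1\in\Delta_1$. This is exactly the leverage I would exploit: the nontrivial point stabiliser inside $N$ provides a nontrivial element of the kernel $K_1$ that can be made to act on the subtrees hanging off $\Gamma(u)$, while the block system lets this action be distributed independently over the $r\geq 2$ blocks so that it is respected by all of $L_1$ (which only permutes the blocks) rather than by $M_1$ alone. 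I would formalise this as an iterated, wreath-type extension: starting from the base amalgam and using $N$, build $K_1^{(n)}$ as an $n$-fold iterate in which each level contributes a copy of a fixed nontrivial gadget (indexed by the blocks), set $G_1^{(n)}=K_1^{(n)}\rtimes L_1$ or an appropriate non-split analogue, and define the matching $K_2^{(n)}$ and $G_2^{(n)}$ so that $B^{(n)}=K_1^{(n)}M_1=K_2^{(n)}M_2$ is a single group. Since the gadget is nontrivial and $N_{\alpha_1}\neq1$, each level multiplies $|B^{(n)}|$ by a factor $>1$, so $|B^{(n)}|\to\infty$ while remaining finite.

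Finally I would verify faithfulness of the completion $G^{(n)}=G_1^{(n)}\ast_{B^{(n)}}G_2^{(n)}$ -- that $K_1^{(n)}$ and $K_2^{(n)}$ generate no common normal subgroup of the two factors -- so that $G^{(n)}$ acts on $\mathfrak T$ with $(\mathfrak T,G^{(n)})$ locally $[L_1,L_2]$ and $|G^{(n)}_{uv}|=|B^{(n)}|\geq c$ once $n$ is large, giving the theorem. The main obstacle is the simultaneous compatibility of the previous paragraph: engineering the enlarged kernel $K_1^{(n)}$ to be genuinely normal in $G_1^{(n)}$ (i.e.\ stable under all of $L_1$, which is what forces the block-wise, independent structure coming from intransitivity) while at the same time arranging that the \emph{same} group $B^{(n)}$ carries the second normal subgroup $K_2^{(n)}$ with $B^{(n)}/K_2^{(n)}\cong M_2$ extending to $G_2^{(n)}$, all the while keeping the joint action faithful. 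Everything else -- the Bass--Serre dictionary, finiteness, and the growth of orders -- is then routine.
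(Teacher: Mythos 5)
Your reduction to amalgams is exactly the paper's route: Section~\ref{sec:graph} shows (via Serre's theory and, for the finite-graph version, residual finiteness of amalgamated products) that Theorem~\ref{cor:discsubgroups} is equivalent to producing, for each $c$, a rank two \emph{faithful} amalgam of permutation type $[L_1,L_2]$ with Borel subgroup of finite order at least $c$, and your dictionary ($G_i=G_u,G_v$, $B=G_{uv}$, $K_i=\mathrm{core}_{G_i}(B)$, $|B|=|K_i||M_i|$) is correct. You also correctly identify the leverage: a normal subgroup $N\trianglelefteq L_1$ that is intransitive with $N_{\alpha_1}\neq 1$, its block system, and a wreath-extension-type enlargement of the kernel.

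However, there is a genuine gap: the construction itself, which is the entire technical content of the proof, is only announced, and the step you defer as ``the main obstacle'' is precisely the part that is not routine. Two things are missing. First, the enlarged kernel must be an iterated power of $N_{\alpha_1}$ (not of $(L_1)_{\alpha_1}$) sitting inside a group $A$ that still surjects onto $L_1$; in the paper this is achieved by cutting $A$ out of $L_1\ltimes\bigl(L_{1,\lambda}^{\Delta}\bigr)^m$ by a cocycle-like condition built from a transversal of $S_\delta$ in $S=L_1/K$ (Definition~\ref{A} and Lemmas~\ref{silly1}--\ref{lemma:yes}), and one must check this set is a group and that the kernel of $A\to L_1$ is exactly $K_\lambda^{|\Delta|m}$. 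Second, and more delicately, the \emph{same} group $B$ must carry the $L_2$-structure: the paper does this by indexing the $m+1$ ``coordinates'' $g_0,\dots,g_m$ by $\{0,\dots,m\}$ with $R=L_2$ acting on $\{0,\dots,m-1\}$ in $\ell$ blocks and fixing $m$, then proving that coordinate permutation is an action of $R$ on $C$ by automorphisms (Lemma~\ref{lemma:action}, a nontrivial computation because the multiplication in the $((g_i),(h_j))$ coordinates is not componentwise). Finally, faithfulness (Proposition~\ref{neverends}) is not a formality: it is an induction over the indices $\{0,\dots,m\}$ that interleaves normality under $P_1$ (Claims 1 and 2, shifting between the propositions $G(i)$ and $H(i-1)$) with normality under $P_2$ (Claim 3, moving along $R$-orbits), and it uses crucially that consecutive indices $z-1,z-2$ lie in a common $R$-orbit while $m$ is fixed. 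Without carrying out this construction and these verifications, the argument does not yet establish the theorem.
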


 Theorem ~\ref{cor:discsubgroups} is a significant improvement of  \cite[(7.14)]{basskulkarni} which
shows that the automorphism group of a bi-regular tree with composite valencies contains strictly
ascending chains of locally-transitive discrete subgroups, but with no control over the local
permutation groups.

Question~\ref{question} also has a natural formulation in terms of group amalgams of rank two.
Before presenting our result using this language, we first define amalgams, following~\cite{ivsp}.

\begin{definition}
Let $k\geq 2$. A \emph{rank $k$ amalgam} is a finite set  $\mathcal A$  together with a set of $k$ subsets
$P_1,\ldots,P_k$, where each $P_i$ forms a group, $\bigcup_{i=1}^k P_i=\mathcal A$, $\bigcap_{i=1}^k
P_i\neq\emptyset$ and, for every  $i,j\in\{1,\ldots,k\}$ the group operations defined on $P_i$ and
$P_j$ coincide when restricted to $P_i \cap P_j$. 

The \emph{Borel subgroup}  of $\mathcal A$  is $\bigcap_{i=1}^k P_i $ and is denoted
$\mathcal{B}(\mathcal A)$. If there is no nontrivial subgroup of $\mathcal{B}(\mathcal A)$ that is
normalised by each of $P_1,\ldots,P_k$ then we say that $\mathcal A$ is \emph{faithful}. The
\emph{permutation type} of $\mathcal A$  is the multiset $[L_1,\dots,L_k]$ where $L_i$ is the
permutation group induced by $P_i$ in its action on the right cosets of $\mathcal{B}(\mathcal A)$ in
$P_i$.
\end{definition}

In Section~\ref{sec:graph} we will show the equivalence between Theorem~\ref{theo:graph} and
Theorem~\ref{theo:main} below.

\begin{theorem}\label{theo:main}
Let $L_1$ and $L_2$ be nontrivial finite transitive permutation groups. If one of $L_1$ and $L_2$ is
not semiprimitive then, for every integer $c$, there exists a rank two faithful amalgam of
permutation type $[L_1,L_2]$ with Borel subgroup of order at least $c$.
\end{theorem}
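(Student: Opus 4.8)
The plan is to prove Theorem~\ref{theo:main} directly, by constructing, for each integer $c$, an explicit faithful amalgam of permutation type $[L_1,L_2]$ whose Borel subgroup has order at least $c$. Since $[L_1,L_2]$ is a multiset, I may assume without loss of generality that $L_1$ is not semiprimitive, and fix a normal subgroup $N\trianglelefteq L_1$ that is neither transitive nor semiregular. Write $\Omega$ for the domain of $L_1$, fix $\omega\in\Omega$, and set $S_1=(L_1)_\omega$. Non-semiregularity gives $N_\omega\neq 1$, and intransitivity gives that $N$ has $m\geq 2$ orbits $\Omega=\Delta_1\sqcup\cdots\sqcup\Delta_m$. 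Both facts will be used essentially; the absence of such an $N$ is exactly the ingredient that becomes unavailable when $L_1$ is semiprimitive.

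To build an amalgam of type $[L_1,L_2]$ it suffices to produce finite groups $P_1\geq B\leq P_2$ such that the action of $P_i$ on the right cosets of $B$ is permutation isomorphic to $L_i$; equivalently, writing $K_i=\mathrm{Core}_{P_i}(B)$, one needs $P_i/K_i\cong L_i$ and $B/K_i\cong S_i$ with the induced actions agreeing. Since $|B|=|K_i|\,|S_i|$, the Borel subgroup is large precisely when the $K_i$ are large. The most economical realisation takes $P_i$ to be a split extension $K_i\rtimes L_i$ with $B$ the preimage of $S_i$ and $K_i$ a finite $L_i$-group; as $L_i$ is faithful and transitive we have $\mathrm{Core}_{L_i}(S_i)=1$, and one checks at once that then $\mathrm{Core}_{P_i}(B)=K_i$ and the coset action is $L_i$, as required. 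An uninflated choice (for instance $K_1$ a copy of $S_2$ with trivial action, giving $B\cong S_1\times S_2$) already yields a faithful amalgam of the correct type, but of bounded order; the entire difficulty is therefore to exhibit a single group $B$ carrying \emph{simultaneously} two structures $K_1\rtimes S_1$ and $K_2\rtimes S_2$, with $|K_1|$ arbitrarily large and the amalgam faithful.

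This is where $N$ enters. The natural choice is to let the inflation live on the $L_1$-side inside the stabiliser: build $K_1$ as the base group of a wreath product $A\wr L_1$ for a seed group $A$, so that $S_1$ genuinely permutes the inflating coordinates and $|K_1|=|A|^{|\Omega|}$ grows without bound as $|A|\to\infty$ (iterating wreath layers gives an alternative growth parameter). The orbit decomposition $\Delta_1,\dots,\Delta_m$ arising from the intransitivity of $N$ is then used to locate a second normal subgroup $K_2\trianglelefteq B$ with $B/K_2\cong S_2$ that can be turned into an $L_2$-group: one transports the inflating coordinates across the $m\geq 2$ blocks so that the induced $L_2$-action permutes pieces that the $L_1$-action keeps invariant. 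Setting $P_2=K_2\rtimes L_2$ then gives the second side over the same $B$, and choosing $|A|$ large enough forces $|B|\geq c$.

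Finally I would verify the three required properties. The permutation type is $[L_1,L_2]$ by the core computation above, and this is the point at which intransitivity of $N$ is needed: the inflation must not fuse the $N$-orbits nor render $N$ transitive, so that $P_i/K_i$ remains $L_i$ acting in its original way. Growth is immediate from the free parameter. The genuine obstacle is \emph{faithfulness}, namely that no nontrivial subgroup of $B$ is normalised by both $P_1$ and $P_2$. A subgroup of $B$ normal in $P_i$ lies in $K_i$, so a common such subgroup lies in $K_1\cap K_2$ and is invariant under both the $L_1$- and the $L_2$-action; the heart of the proof is to arrange these two actions on the inflating part, using $N_\omega\neq1$ and the blocks $\Delta_i$ to decouple them, so that their only common invariant subgroup is trivial. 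I expect this coupling step to be the main technical hurdle, requiring a careful choice of the seed $A$ and of how the non-semiregular fixed points and the orbits of $N$ are used to glue the two sides.
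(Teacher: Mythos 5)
Your proposal correctly identifies the architecture that the paper itself uses: a single group $B$ carrying two structures $K_i\rtimes S_i$ with $P_i$ inducing $L_i$ on the cosets of $B$, the reduction of faithfulness to showing that $K_1\cap K_2$ contains no subgroup invariant under both actions, and the two distinct roles of non-semiprimitivity ($N_\omega\neq 1$ for unbounded growth, intransitivity of $N$ for the block structure that lets the $L_2$-side move the inflating coordinates). However, what you have written is a road map rather than a proof, and the step you defer --- ``arrange these two actions on the inflating part \dots\ so that their only common invariant subgroup is trivial'' --- is precisely the entire technical content of the theorem. Two concrete pieces are missing. First, the inflated group on the $L_1$-side cannot simply be a wreath product $A\wr L_1$ with $B$ the preimage of $(L_1)_\omega$: the permutation group induced on the cosets of that preimage would then be much larger than $L_1$, so the permutation type would be wrong. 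The paper must cut down to a proper subgroup $A\leq L_1\ltimes (L_\lambda^\Delta)^m$ defined by a twisted condition $(f_i(\delta^x))^\pi=(x(g^\pi)^{-1})^\tau g^\pi(x^\tau)^{-1}$ relative to a transversal $\tau$ of $S_\delta$ in $S$, and it is exactly this condition that forces the kernel of the projection to $L_1$ to be $K_\lambda^{|\Delta|m}$ and the coset action to be $L_1$. Your proposal gives no analogue of this correction.

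Second, the growth parameter in the paper is not the order of a free seed group (which $L_2$ would then have to act on, with no control over the resulting permutation type) but the number $m=\ell m_2$ of copies of the fixed group $K_\lambda^{\Delta}$, indexed so that $L_2$ permutes the slots $\{0,\dots,m\}$ with a deliberate offset: the element $g$ occupies slot $0$ while each $f_i$ is split as $(f_i(\delta),\,f_i|_{\Delta\setminus\{\delta\}})$ occupying slots $i$ and $i-1$. This interlocking of consecutive indices is what drives the faithfulness induction (if the vanishing statement $G(z)$ fails for a minimal $z$, then $z$ is the minimum of its $L_2$-orbit, which forces $z-1$ and $z-2$ into a common orbit and propagates the failure downward to a contradiction). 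Without specifying such a coupling your faithfulness claim cannot be verified, and naive couplings --- for instance letting $L_2$ permute independent wreath copies --- typically leave diagonal-type invariant subgroups alive. So the proposal matches the paper's strategy in outline, but the gap you flag yourself is genuine and is where all the work of the proof lies.
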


Our proof of Theorem~\ref{theo:main} is constructive and can be found in Section~\ref{Sec:ResAreSP}.
The construction used in the proof is a generalisation of the construction that appeared in
\cite[Section~4]{PSVRestrictive} (in fact, Theorem~\ref{theo:graph} generalises \cite[Theorem
4]{PSVRestrictive}) which in turn was inspired by the so-called wreath extension construction
\cite[Chapter IV, 8.1]{malle}. A precursory idea to this construction can also be traced
to~\cite[Section~$4$]{Pablo1}.   

Theorem~\ref{theo:main} naturally leads one to wonder about the corresponding statement for amalgams
of rank greater than two. Here is the complete answer in this case.

\begin{theorem}\label{theo:mainmain}
Let $k\geq 3$ and let $L_1,\ldots,L_k$ be nontrivial finite transitive permutation groups. The
following are equivalent:
\begin{enumerate}
\item One of $L_1,\ldots,L_k$ is not regular.
\item For every integer $c$, there exists a rank $k$ faithful amalgam of permutation type
$[L_1,\ldots,L_k]$  with Borel subgroup of order at least $c$.
\end{enumerate}
\end{theorem}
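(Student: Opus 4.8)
The plan is to treat the two implications separately: $(2)\Rightarrow(1)$ is short, while $(1)\Rightarrow(2)$ requires an explicit construction. For $(2)\Rightarrow(1)$ I argue by contraposition. Suppose every $L_i$ is regular and let $\mathcal A$ be any faithful amalgam of permutation type $[L_1,\dots,L_k]$, with Borel subgroup $\mathcal B=\mathcal B(\mathcal A)$. In the action of $P_i$ on the right cosets of $\mathcal B$, the stabiliser of the coset $\mathcal B$ is $\mathcal B$ itself, and the induced group $L_i$ is $P_i/\mathrm{core}_{P_i}(\mathcal B)$; hence $L_i$ is regular precisely when the point stabiliser $\mathcal B/\mathrm{core}_{P_i}(\mathcal B)$ is trivial, that is, precisely when $\mathcal B=\mathrm{core}_{P_i}(\mathcal B)$, i.e. $\mathcal B\trianglelefteq P_i$. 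If this holds for every $i$, then $\mathcal B$ is a subgroup of $\mathcal B$ normalised by each of $P_1,\dots,P_k$, so faithfulness forces $\mathcal B=1$. Thus every faithful amalgam of this type has trivial Borel subgroup, contradicting $(2)$ (take $c=2$). This proves $(2)\Rightarrow(1)$ and already isolates the mechanism: regularity of $L_i$ is equivalent to normality of the Borel in $P_i$.

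For $(1)\Rightarrow(2)$ I may assume, after relabelling, that $L_1$ is not regular, so its point stabiliser $H_1=(L_1)_\alpha$ is nontrivial and, by the equivalence just noted, the Borel can never be normal in $P_1$. The local model that supplies a large Borel is the wreath product: for a nontrivial finite group $A$, the group $A\wr L_i=A^{\Omega_i}\rtimes L_i$ contains the point stabiliser $A^{\Omega_i}\rtimes H_i$ as a subgroup of index $n_i=|\Omega_i|$, whose coset action is permutation isomorphic to $L_i$ with kernel the large normal subgroup $A^{\Omega_i}$. Taking $A$ to be a nonabelian simple group ensures that the normal subgroups of the base are exactly the sub-blocks, which will make the eventual faithfulness analysis combinatorial. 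Each $L_i$ is thereby realised with a Borel of order $|A|^{n_i}|H_i|$, which tends to infinity with $|A|$, so growth of the Borel will be automatic once the local models are assembled into a single amalgam.

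The real work is to fuse these local models along one common abstract Borel group $\mathcal B$. I would build $\mathcal B$ from a single base $A^{\Delta}$ indexed by a common block set $\Delta$, together with a fixed top group chosen to surject onto every $H_i$, and realise each $P_i$ as an extension of $L_i$ in which $\mathrm{core}_{P_i}(\mathcal B)$ is a large sub-block $A^{\Delta_i}$ with $\mathcal B/A^{\Delta_i}\cong H_i$. Since $\mathcal B/\mathrm{core}_{P_i}(\mathcal B)\cong H_i$ is bounded while $|\mathcal B|\to\infty$, each core has small index $|H_i|$, so the cores overlap heavily and their intersection stays large; this is exactly why faithfulness is not automatic and must be engineered. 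Matching one common Borel simultaneously against all the prescribed quotients $H_i$, and arranging the several extensions so that their conjugation actions permute the blocks of $\Delta$, is the bookkeeping heart of the construction.

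The step I expect to be the main obstacle is verifying faithfulness, i.e. that no nontrivial subgroup of $\mathcal B$ is normalised by all of $P_1,\dots,P_k$. Such a subgroup meets the socle $A^{\Delta}$ in a sub-block $A^{J}$ with $J\subseteq\Delta$ invariant under the block permutations induced by every $P_i$. Here both hypotheses must enter. Because the rank is at least three, there are enough groups acting on $\Delta$ to arrange that these induced permutations jointly generate a transitive group, forcing $J\in\{\emptyset,\Delta\}$; this joint transitivity is precisely what cannot be secured with only two groups under the weak hypothesis that a single $L_i$ is non-regular, and it is the reason the rank two statement, Theorem~\ref{theo:main}, instead needs the stronger failure of semiprimitivity. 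The case $J=\Delta$ is excluded because $L_1$ is not regular, which lets us keep the full base $A^{\Delta}$ from being normalised by $P_1$. A final routine check that nothing survives in the top quotient $\mathcal B/A^{\Delta}$ then yields $J=\emptyset$ and hence faithfulness. With faithfulness in hand, letting $|A|\to\infty$ produces faithful amalgams of type $[L_1,\dots,L_k]$ with Borel subgroup of unbounded order, establishing $(1)\Rightarrow(2)$.
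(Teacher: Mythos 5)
Your argument for $(2)\Rightarrow(1)$ is correct and is exactly the paper's: regularity of each $L_i$ forces $\mathcal B(\mathcal A)\trianglelefteq P_i$ for all $i$, so faithfulness gives $\mathcal B(\mathcal A)=1$. The gap is in $(1)\Rightarrow(2)$, and it is not merely that the ``bookkeeping heart'' of the construction is left unexecuted: the architecture you propose is provably incompatible with faithfulness. You fix the block set $\Delta$ and a fixed top group $T$, put $A^\Delta\leq\mathcal B$ for a nonabelian simple group $A$, and grow the Borel by letting $|A|\to\infty$. But in any amalgam of permutation type $[L_1,\dots,L_k]$ the quotient $\mathcal B/\mathrm{core}_{P_i}(\mathcal B)$ is the point stabiliser $H_i$ of $L_i$, so the index $[\mathcal B:\mathrm{core}_{P_i}(\mathcal B)]=|H_i|$ is bounded independently of $A$. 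Since $A^\Delta\leq\mathcal B\leq P_i$, the subgroup $\mathrm{core}_{P_i}(\mathcal B)\cap A^\Delta$ is normal in $A^\Delta$ of index at most $|H_i|$, and once $|A|>\max_i|H_i|$ the only such normal subgroup (they are all sub-products $A^J$) is $A^\Delta$ itself. Hence every core contains $A^\Delta$; as $A^\Delta$ is then characteristic in each core for large $|A|$ (it is generated by the subnormal copies of $A$, and $T$ is fixed), it is normal in every $P_i$, so the amalgam is not faithful. In particular your case $J=\Delta$ is not ``excluded because $L_1$ is not regular'' --- for large $|A|$ it is forced, and the faithfulness analysis cannot be rescued.

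The paper escapes this by growing the index set rather than the group: it takes $V=(L_1)_0$, which is nontrivial precisely because $L_1$ is not regular, and $U=V^\Omega$ with $|\Omega|=\ell m_2m_3\to\infty$, arranging $P_1=L_1\times(U'\rtimes\cdots)\times\cdots$ so that the distinguished coordinate $V_{\omega_0}$ of $U$ is identified with the (core-free) point stabiliser inside the direct factor $L_1$; thus $\mathrm{core}_{P_1}(B)$ misses that coordinate even though $U\leq P_1$. The genuine difficulty, of which your sketch has no analogue, is Lemma~\ref{basic}: one must make $L_2$ and $L_3$ act faithfully on an arbitrarily large common set $\Omega$ so that $\langle L_2,L_3\rangle$ is transitive while the stabiliser of $\omega_0$ in each is the original point stabiliser (so the coset actions of $P_2$ and $P_3$ still induce $L_2$ and $L_3$) and the needed commuting relations hold; this is where $k\geq 3$ really enters, with two of the local groups spent on transitivity and one left free to be non-regular. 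Your closing heuristic comparing rank $\geq 3$ with the rank two case of Theorem~\ref{theo:main} is reasonable, but without such a lemma and a replacement for the $|A|\to\infty$ mechanism, the implication $(1)\Rightarrow(2)$ is not established.
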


% We removed since maybe too technical for this part / distracting.
%Theorem~\ref{theo:mainmain} is proved in Section~\ref{sec4}. Again, our proof is constructive and
%what we prove is stronger than Theorem~\ref{theo:mainmain}: in the direction $(1)\Longrightarrow
%(2)$, the amalgams $\mathcal{A}=\bigcup_{i=1}^{k}P_i$ we construct   have
%$\mathcal{B}(\mathcal{A})=P_i\cap P_j$ for every distinct $i,j\in \{1,\ldots,k\}$.
%
In fact, it is easy to see that if $L_1,\ldots,L_k$ are regular then a faithful amalgam $\mathcal A$
of permutation type $[L_1,\ldots,L_k]$ must have $\mathcal{B}(\mathcal A)=1$ (see
Section~\ref{sec4}). The real meat of Theorem~\ref{theo:mainmain} is therefore the statement that
for rank at least three, these trivial examples are the only ones which admit upper bounds on
$|\mathcal{B}(\mathcal A)|$ depending  upon the permutation type alone. This is in sharp contrast
with the situation in the rank two case; Goldschmidt's result has the highly  nontrivial consequence
that a rank two  faithful amalgam of permutation type $[\Sym(3),\Sym(3)]$ has Borel subgroup of
order at most $128$. In particular, the na\"{i}ve ``$k=2$'' version of Theorem~\ref{theo:mainmain}
is false. We find the relative simplicity of the higher rank case  rather surprising.  Note that
under additional assumptions, the order of the Borel subgroup of a rank three amalgam can be
bounded, see for example~\cite{stelltime}.

\section{Equivalence of Theorems~\ref{theo:graph},~\ref{cor:discsubgroups} and~\ref{theo:main}}
\label{sec:graph}

\begin{lemma}\label{newLemma}
Let $L_1$ and $L_2$ be finite transitive permutation groups and let $B$ be a finite group. The
following are equivalent:
\begin{enumerate}
\item There exists a rank two faithful amalgam of permutation type $[L_1,L_2]$ with Borel subgroup
$B$.
\item There exists a locally $[L_1,L_2]$ pair $(\mathfrak{T},G)$ such that $\mathfrak{T}$ is an
infinite tree and $G_{uv}=B$ for some edge $\{u,v\}$ of $\mathfrak{T}$.
\item There exists a locally $[L_1,L_2]$ pair $(\Gamma,H)$ such that $\Gamma$ is finite and
$H_{uv}=B$ for some edge $\{u,v\}$ of $\Gamma$.
\end{enumerate}
\end{lemma}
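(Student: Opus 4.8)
The plan is to prove Lemma~\ref{newLemma} by establishing a cycle of implications $(1)\Rightarrow(2)\Rightarrow(3)\Rightarrow(1)$, exploiting Bass--Serre theory to move between the amalgam and the tree, and a quotient construction to pass from the infinite tree to a finite graph.

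For $(1)\Rightarrow(2)$, I would start from a rank two faithful amalgam $\mathcal A$ with subgroups $P_1,P_2$ and Borel subgroup $B=\mathcal B(\mathcal A)=P_1\cap P_2$. The key object is the \emph{universal completion} of the amalgam, namely the free amalgamated product $G=P_1\ast_B P_2$. By Bass--Serre theory, $G$ acts on its associated Bass--Serre tree $\mathfrak T$, whose vertices are the cosets of $P_1$ and $P_2$ and whose edges are the cosets of $B$. This action is without inversions, and the stabilisers are precisely (conjugates of) $P_1$, $P_2$, and $B=G_{uv}$ for the base edge $\{u,v\}$. Since $P_i$ acts on the cosets of $B$ in $P_i$ as $L_i$, and these cosets are exactly the edges of $\mathfrak T$ incident to the corresponding vertex, the local action $G_v^{\mathfrak T(v)}$ is permutation isomorphic to $L_i$. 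Thus $(\mathfrak T,G)$ is locally $[L_1,L_2]$ with $G_{uv}=B$. I would also need to confirm that $\mathfrak T$ is infinite: this follows because $L_1,L_2$ are nontrivial (so both valencies are at least $2$), forcing the tree to be infinite.

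For $(2)\Rightarrow(3)$, given a locally $[L_1,L_2]$ pair $(\mathfrak T,G)$ on the infinite tree with $G_{uv}=B$ finite, I would produce a finite quotient. The natural approach is to find a normal subgroup $N\trianglelefteq G$ of finite index that acts freely on $\mathfrak T$ (so $N$ intersects every vertex- and edge-stabiliser trivially) and set $\Gamma=N\backslash\mathfrak T$ with $H=G/N$. A group acting on a tree has a free subgroup of finite index when it is virtually free and residually finite in an appropriate sense; here, since $G$ is an amalgamated product of finite groups over a finite subgroup, $G$ is virtually free, and one can produce such an $N$ using residual finiteness of amalgams of finite groups (a theorem of Gruenberg). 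The quotient $\Gamma$ is then finite and locally finite, $H$ acts on it, and because $N$ acts freely the local action of $H$ at each vertex is identical to that of $G$, so $(\Gamma,H)$ remains locally $[L_1,L_2]$ with $H_{uv}\cong G_{uv}=B$. The main subtlety is ensuring $N$ can be chosen to avoid all the finitely many conjugacy classes of (finite) stabilisers simultaneously while keeping the local actions intact; this is where residual finiteness does the real work.

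For $(3)\Rightarrow(1)$, I would reverse the first step. Given a finite locally $[L_1,L_2]$ pair $(\Gamma,H)$ with base edge $\{u,v\}$, set $P_1=H_u$, $P_2=H_v$, and $B=H_{uv}=P_1\cap P_2$. These assemble into a rank two amalgam $\mathcal A$ of permutation type $[L_1,L_2]$, since $H_u$ acts on $\Gamma(u)$ (the cosets of $B$ in $P_1$) as $L_1$ and similarly for $v$. Faithfulness requires checking that no nontrivial subgroup of $B$ is normalised by both $P_1$ and $P_2$; this should follow from connectedness of $\Gamma$ together with $H$ acting on $\Gamma$ (a subgroup normalised by the stabilisers of both endpoints of every edge would be normal in the group generated by all vertex-stabilisers, which acts with trivial kernel on a connected graph). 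I expect the $(2)\Rightarrow(3)$ direction to be the main obstacle, as it is the only step requiring a genuinely nontrivial input (residual finiteness of the amalgam) rather than a direct translation via Bass--Serre theory; the other two directions are essentially the standard correspondence between amalgams and group actions on their Bass--Serre trees.
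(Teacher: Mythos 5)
Your overall architecture is the same as the paper's (Bass--Serre tree for $(1)\Rightarrow(2)$, residual finiteness and a quotient for $(2)\Rightarrow(3)$, reading off the amalgam of the two vertex-stabilisers for $(3)\Rightarrow(1)$), and the first and third implications are fine. But there is a genuine gap in $(2)\Rightarrow(3)$: you choose $N$ so that it acts freely on $\mathfrak T$, i.e.\ meets every vertex- and edge-stabiliser trivially, and then assert that ``because $N$ acts freely the local action of $H$ at each vertex is identical to that of $G$.'' That implication is false. Freeness prevents $N$ from fixing a vertex, but it does not prevent an element of $N$ from carrying one neighbour of a vertex $w$ to a \emph{different} neighbour of $w$; when that happens the quotient graph folds, the valency drops, and the local action is destroyed. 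A concrete counterexample: take $L_1=L_2=\mathrm C_2$, $B=1$, so $G=\mathrm C_2 * \mathrm C_2$ acting on the bi-infinite path. The infinite cyclic subgroup $N$ generated by a translation of length $2$ is normal of finite index and acts freely, yet the quotient identifies the two neighbours of each vertex, so $(\Gamma,H)$ is not locally $[\mathrm C_2,\mathrm C_2]$.

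The fix is to demand more of $N$ than avoiding the (conjugates of the) stabilisers: realising the tree on the coset spaces $G/G_u\sqcup G/G_v$, an identification of two distinct neighbours $G_vg\neq G_vh$ of $G_u$ by some $r\in N$ forces $r\in G_uG_vG_u$. So one should apply residual finiteness to the finite set $X=G_uG_vG_u\cup G_vG_uG_v$ (which contains $G_u\cup G_v$) and choose a finite-index normal $N$ with $N\cap X=1$; this is exactly what the paper does, citing Baumslag. Two smaller points: if $G$ happens to be vertex-transitive on $\mathfrak T$ you should first pass to the index-two subgroup preserving the bipartition before writing $G$ as $G_u *_B G_v$ and invoking the coset description of the tree; and note that the lemma's hypotheses do not explicitly exclude trivial $L_i$, so your remark that the tree is infinite ``because both valencies are at least $2$'' is using an assumption not stated in the lemma (the paper has the same implicit convention, so this is cosmetic).
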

\begin{proof}
$(1)\Longrightarrow  (2):$ Let $\mathcal A$ be a rank two faithful amalgam of permutation type
$[L_1,L_2]$ with Borel subgroup $B$. Let $P_1$ and $P_2$ be the two groups involved in $\mathcal A$
and let $G = P_1 *_B P_2$ (that is, $G$ is the free product of $P_1$ and $P_2$ amalgamated over
$B$). By \cite[I.4.1, Theorem~7]{Serre}, there exists an infinite tree $\mathfrak T$ on which $G$
acts faithfully, edge- but not vertex-transitively, and an edge $\{u,v\}$ of $\mathfrak T$ such that
$G_{uv}=B$, $G_u=P_1$ and $G_v=P_2$. As $\mathfrak T$ is $G$-edge- but not $G$-vertex-transitive, it
must be $G$-locally-transitive. Since $\mathcal A$ is of permutation type $[L_1,L_2]$, it follows
that $(\mathfrak T, G)$ is locally $[L_1,L_2]$.

$(2)\Longrightarrow  (3):$ Let $G'$ be the largest subgroup of $G$ that preserves the bipartition of
$\mathfrak{T}$. Note that $(\mathfrak{T},G')$ is locally $[L_1,L_2]$ and $G'_{uv}=B$. By replacing
$G$ with $G'$, we may thus assume that $G$ is not vertex-transitive. By~\cite[I.4.1,
Theorem~6]{Serre}, it follows that $G$ is  isomorphic to $G_u *_{B} G_v$.

By \cite[I.4.1, Proof of Theorem~7]{Serre}, we may assume that the vertex set of $\mathfrak T$ is
the disjoint union of the right coset spaces $G/G_u$ and  $G/G_v$, with two vertices being adjacent
if they have nonempty intersection, and that the action of $G$ on $\mathfrak T$ is given by right
multiplication. In particular, $G_u$ and $G_v$ are adjacent when viewed as vertices of $\mathfrak
T$. Since $\mathfrak T$ is $G$-locally-transitive it follows that the neighbourhood of $G_u$ is
$\{G_v g\mid g\in G_u\}$.

Let $X=G_uG_vG_u\cup G_vG_uG_v$. Note that, as $L_1$, $L_2$ and $B$ are finite, so are $G_u$, $G_v$
and $X$. By \cite[Theorem~2]{Baumslag}, $G$ is residually finite and hence there exists a  normal
subgroup $R$ of finite index in $G$ with $R \cap X = \{1\}$. Let $H = G / R$ and let $\Gamma$  be
the normal quotient graph $\mathfrak{T}/ R$. (The vertices of $\Gamma$ are the $R$-orbits on the
vertex set of $\mathfrak{T}$, with two such $R$-orbits adjacent in $\Gamma$ if there is an edge
between them in $\mathfrak{T}$.) Note that $\Gamma$ is $H$-locally-transitive and finite.

Since $\mathfrak T$ is locally-transitive, it is bi-regular. We now show that $\Gamma$ is bi-regular
with the same valencies as $\mathfrak{T}$. We argue by contradiction and suppose, without loss of
generality, that the $R$-orbit of $G_u$, viewed as a vertex of $\Gamma$, has valency strictly less
than $G_u$, viewed as a vertex of $\mathfrak{T}$. It follows from the definition of $\Gamma$ that
the vertex $G_u$ of $\mathfrak{T}$ must have two distinct neighbours in the same $R$-orbit. Recall
that  the neighbourhood of $G_u$ in $\mathfrak{T}$ is $\{G_v g\mid g\in G_u\}$. In particular, there
exist $g,h\in G_u$ and $r\in R$ such that $G_vg\neq G_vh$ and $G_vgr=G_vh$. This implies that $r\in
G_uG_vG_u \subseteq X$ and hence $r \in R \cap X = \{1\}$, which is a contradiction.

Let $K/R$ be the kernel of the action of $H=G/R$ on $\Gamma$. By the previous paragraph, $\Gamma$ is
bi-regular with the same valencies as $\mathfrak{T}$ and a standard argument yields that $K$ is
semiregular on $\mathfrak{T}$. In particular, $K=R$ (and $\mathfrak{T}$ is a regular cover of
$\Gamma$) and $H$ acts faithfully on $\Gamma$. It follows that the stabiliser in $H$ of the vertex
$G_uR$ of $\Gamma$ is $G_uR/R\cong G_u$, the stabiliser of the vertex $G_vR$ is  $G_vR/R\cong G_v$
and the stabiliser of the edge $\{G_uR,G_vR\}$ is $BR/R\cong B$. Since $(\mathfrak{T},G)$ is locally
$[L_1,L_2]$, this implies that $(\Gamma,H)$ is locally $[L_1,L_2]$.

$(3)\Longrightarrow  (1):$ Let $\mathcal A$ be the rank two  amalgam of the groups $H_u$ and $H_v$
with Borel subgroup $H_{uv}$. Since $\Gamma$ is $H$-locally-transitive, the group generated by $H_u$
and $H_v$ is transitive on edges of $\Gamma$. In particular, any subgroup of $H_{uv}$ that is
normalised by both $H_u$ and $H_v$ must be trivial. This shows that $\mathcal A$ is faithful.
Clearly, $\mathcal A$ has permutation type $[L_1,L_2]$.
\end{proof}

The following is an immediate corollary to Lemma~\ref{newLemma}.

\begin{corollary}\label{newCor}
Let $L_1$ and $L_2$ be finite transitive permutation groups. The following are equivalent:
\begin{enumerate}
\item For every integer $c$, there exists a rank two faithful amalgam of permutation type
$[L_1,L_2]$ with Borel subgroup of order at least $c$.
\item For every integer $c$, there exists a locally $[L_1,L_2]$ pair $(\mathfrak{T},G)$ such that
$\mathfrak{T}$ is an infinite tree and $c\leq |G_{uv}|<\infty$ for some edge $\{u,v\}$ of
$\mathfrak{T}$.
\item $[L_1,L_2]$ is not locally-restrictive.
\end{enumerate}
\end{corollary}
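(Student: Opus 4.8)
The plan is to derive the corollary directly from Lemma~\ref{newLemma} by quantifying over the constant $c$. The essential point is that in each of the three equivalent statements of Lemma~\ref{newLemma}, the finite group $B$ occurs as a common invariant: it is simultaneously the Borel subgroup of the amalgam, the edge-stabiliser $G_{uv}$ of the infinite tree, and the edge-stabiliser $H_{uv}$ of the finite graph. In particular, whenever one passes between these three objects via Lemma~\ref{newLemma}, the order $|B|$ is preserved, and it is precisely this order that is being compared to $c$ in all three conditions of the corollary.

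First I would establish the equivalence of conditions (1) and (2) of the corollary. Fix an integer $c$. If condition (1) holds for this $c$, then there is a rank two faithful amalgam of permutation type $[L_1,L_2]$ with Borel subgroup $B$ satisfying $|B|\geq c$; applying the implication $(1)\Rightarrow(2)$ of Lemma~\ref{newLemma} produces a locally $[L_1,L_2]$ pair $(\mathfrak{T},G)$ with $\mathfrak{T}$ an infinite tree and $G_{uv}=B$, whence $c\leq|G_{uv}|=|B|<\infty$. Conversely, given such a pair with $c\leq|G_{uv}|<\infty$, setting $B=G_{uv}$ and applying $(2)\Rightarrow(1)$ (which is available since $(1),(2),(3)$ form an equivalence cycle in Lemma~\ref{newLemma}) returns an amalgam with Borel subgroup $B$ of order at least $c$. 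As $c$ was arbitrary, conditions (1) and (2) of the corollary are equivalent.

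Next I would connect these to condition (3) by unwinding the definition of locally-restrictive. By definition, $[L_1,L_2]$ is \emph{not} locally-restrictive precisely when, for every integer $c$, some finite $G$-locally-transitive graph $\Gamma$ with $(\Gamma,G)$ locally $[L_1,L_2]$ admits an edge $\{u,v\}$ with $|G_{uv}|>c$; since edge-stabiliser orders are positive integers, this is the same as requiring, for every $c$, such a graph with $|G_{uv}|\geq c$. This is exactly the assertion that condition (3) of Lemma~\ref{newLemma} holds for some finite group $B$ with $|B|\geq c$, for every $c$. Combining this with the equivalence $(1)\Leftrightarrow(3)$ of Lemma~\ref{newLemma} (again preserving $|B|$) shows that condition (3) of the corollary is equivalent to condition (1).

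Since the entire argument is a formal manipulation of Lemma~\ref{newLemma} together with the definition of locally-restrictive, I do not anticipate any genuine obstacle. The only point requiring a little care is to keep the quantifier over $c$ outside the pointwise equivalence used for conditions (1) and (2), while recognising that the ``for every $c$'' already built into the negation of locally-restrictive is exactly what matches condition (3).
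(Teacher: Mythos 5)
Your proposal is correct and matches the paper's approach: the paper simply states that the corollary is immediate from Lemma~\ref{newLemma}, and your argument is exactly the routine quantification over $c$ (using that $|B|$ is preserved across the three equivalent formulations) together with the negation of the definition of locally-restrictive that makes this immediacy precise.
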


The equivalence of Theorems~\ref{theo:graph},~\ref{cor:discsubgroups} and~\ref{theo:main} follows
immediately from Corollary~\ref{newCor}.

\section{Proof of Theorem~\ref{theo:main}}\label{Sec:ResAreSP}
All groups mentioned in the next two sections are finite. We adopt the notation and hypothesis of
Theorem~\ref{theo:main} and, without loss of generality, we assume that $L_1$ is not semiprimitive.
To simplify notation, we write $L=L_1$ and $R=L_2$. Let $m_2$ be the degree of $R$, let $\ell$ be a
positive integer, let $m=\ell m_2$ and let $\Omega=\{(y,z)\mid 1\leq y\leq m_2,1\leq z\leq \ell \}$.
Observe that $|\Omega|=\ell m_2=m$ and that the action of $R$ on $\{1,\ldots,m_2\}$ induces an
action of $R$ on $\Omega$: for $r\in R$ and $(y,z)\in \Omega$, we set $$(y,z)^{r}=(y^r,z).$$ 

We endow the set $\Omega$ with its natural lexicographic order, that is $(y,z)<(y',z')$ if either
$y<y'$, or $y=y'$ and $z<z'$. This total ordering allows us to identify $\Omega$ with
$\{0,\ldots,m-1\}$ in a natural way : $(1,1)$ is identified with $0$, $(m_2,\ell)$ with $m-1$, etc.
We extend the action of $R$ on $\Omega=\{0,\ldots,m-1\}$  to an action of $R$ on $\{0,\ldots,m\}$ by
letting the point $m$ be fixed by every element of $R$.

Since $L$ is not semiprimitive, there exists  a normal subgroup $K$ of $L$ that is neither
transitive nor semiregular. Denote by $\Delta$ the set of orbits of $K$ and let $K'$ be the kernel
of the action of $L$ on $\Delta$. Note that $K'$ is a normal subgroup of $L$ having the same orbits
as $K$ that is neither transitive nor semiregular. We may thus assume that $K=K'$ without loss of
generality. Let $S$ denote the permutation group induced by the action of $L$ on $\Delta$ and let
$\pi: L \rightarrow S$ be the canonical projection with kernel $K$. Fix $\delta\in \Delta$ and
${\lambda}\in \delta$. Since $K$ is transitive on $\delta$, we have $L_{\delta}=KL_{\lambda}$ and
$S_{\delta}\cong L_{\delta} /K=KL_{\lambda} /K\cong L_{\lambda}/(K\cap
L_{\lambda})=L_{\lambda}/K_{\lambda}$.  We sometimes denote by $\pi$ the restriction 
$\pi|_{L_{\lambda}}:L_{\lambda}\to S_{\delta}$, slightly abusing notation.

Fix $\mathcal{T}$ a transversal for the set of right cosets of $S_{\delta}$ in $S$ with $1\in
\mathcal{T}$. For every $s\in S$, there exists a unique element of $\mathcal{T}$, which we denote by
 $s^\tau$, such that $S_{\delta} s=S_{\delta} s^\tau$. The correspondence $s\mapsto s^\tau$ defines
a map $\tau:S\to \mathcal{T}$ with $1^\tau=1$. 

\begin{lemma}\label{silly1}
If $x,s\in S$, then $(xs^{-1})^\tau s (x^\tau)^{-1}\in S_{\delta}$.
\end{lemma}
\begin{proof}
We have $S_{\delta} xs^{-1}=S_{\delta}(xs^{-1})^\tau$ and hence $S_{\delta} x=S_{\delta}
(xs^{-1})^\tau s$. Furthermore, as $S_{\delta} x=S_{\delta} x^{\tau}$, we obtain $S_{\delta}=
S_{\delta} (xs^{-1})^\tau s(x^{\tau})^{-1}$.
\end{proof}

Let $V$ be the set of all functions from $\Delta$ to $L_{\lambda}$.  Under point-wise
multiplication, $V$ is a group isomorphic to $L_{\lambda}^{\Delta}$. Given $f\in V$ and $g\in L$,
let $f^g$ be the element of $V$ defined by 
\begin{equation}\label{eq:def}
f^g(\sigma)=f\left(\sigma^{(g^\pi)^{-1}}\right),\ \sigma\in\Delta.
\end{equation}
This defines a group action of $L$ on $V$ and the semidirect product $V\rtimes L$ is isomorphic to
the standard wreath product $L_{\lambda}\wr_\Delta L$. Moreover, by extending this action of $L$ on
$V$ to the component-wise action of $L$ on $V^m$, we obtain a semidirect product $L\ltimes V^m$
where the multiplication is given by 
\begin{eqnarray}\label{eq:binary}
(g,f_1,\ldots,f_m)(g',f_1',\ldots,f_m')&=&(gg',f_1^{g'}f_1',\ldots, f_m^{g'}f_m').
\end{eqnarray}

We now isolate some subgroups of $L\ltimes V^m$ that  provide the backbone for our construction.

\begin{definition}\label{A}
We define the following subsets of $L\ltimes V^m$:
\begin{align*}
&A= && \left \{ (g,f_1,\ldots,f_m)\in L\ltimes V^m\begin{array}{ c | c }   & 
(f_i(\delta^x))^\pi=(x(g^\pi)^{-1})^\tau g^\pi(x^\tau)^{-1} \\ 
& \textrm{ for every } i\in \{1,\ldots,m\} \textrm{ and for every }x\in S    \end{array} \right
\},\\\nonumber
& C  =& & \{(g,f_1,\ldots,f_m)\in A\mid g\in L_{\lambda}\}, \\\nonumber
&M = &&\{(1,f_1,\ldots,f_m)\in L\ltimes V^m    \mid f_i(\Delta) \subseteq K_{\lambda} \textrm{ for
every }i\in\{1,\ldots,m\}\}.
\end{align*}
Let $\varphi:A\to L$ be the map defined by $\varphi : (g,f_1,\ldots,f_m) \mapsto g$.
\end{definition}

Note that, by Lemma~\ref{silly1}, the element $(x(g^\pi)^{-1})^\tau g^\pi(x^\tau)^{-1}$ in the
definition of $A$ lies in $S_{\delta}$. 

\begin{lemma}\label{silly2}
The set $A$ is a subgroup of $L\ltimes V^m$. 
\end{lemma}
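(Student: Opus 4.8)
The plan is to verify closure under the binary operation and inverses directly, using the multiplication formula \eqref{eq:binary} together with the cocycle-like identity from Lemma~\ref{silly1}. The set $A$ is nonempty (it contains the identity, since for $g=1$ the defining condition reads $(f_i(\delta^x))^\pi = (x)^\tau (x^\tau)^{-1} = 1$, which is satisfied by taking each $f_i$ constant equal to $1$), so it suffices to show that $A$ is closed under the group operation and under taking inverses.

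For closure, I would take two elements $a=(g,f_1,\ldots,f_m)$ and $a'=(g',f_1',\ldots,f_m')$ of $A$ and compute their product using \eqref{eq:binary}, namely $aa' = (gg', f_1^{g'}f_1',\ldots, f_m^{g'}f_m')$. The key is to evaluate $(f_i^{g'}f_i')(\delta^x)$ and apply $\pi$. By the definition \eqref{eq:def} of the action, $f_i^{g'}(\delta^x) = f_i(\delta^{x(g'^\pi)^{-1}})$, and since $\pi$ is a homomorphism, $\left((f_i^{g'}f_i')(\delta^x)\right)^\pi = (f_i(\delta^{x(g'^\pi)^{-1}}))^\pi \cdot (f_i'(\delta^x))^\pi$. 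Now I apply the membership condition for $a$ with the point $x(g'^\pi)^{-1}$ in place of $x$ and the membership condition for $a'$ with $x$; the first factor becomes $(x(g'^\pi)^{-1}(g^\pi)^{-1})^\tau g^\pi (((x(g'^\pi)^{-1})^\tau)^{-1}$ and the second becomes $(x(g'^\pi)^{-1})^\tau g'^\pi (x^\tau)^{-1}$. The two inner factors $\left((x(g'^\pi)^{-1})^\tau\right)^{-1}$ and $(x(g'^\pi)^{-1})^\tau$ cancel, leaving $(x(g'^\pi)^{-1}(g^\pi)^{-1})^\tau\, g^\pi g'^\pi\, (x^\tau)^{-1}$. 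Writing $(gg')^\pi = g^\pi g'^\pi$, this is exactly the defining condition for $aa'$ evaluated at $x$, so $aa'\in A$.

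For inverses, I would compute $a^{-1}$ from \eqref{eq:binary}: one checks that $a^{-1} = (g^{-1}, (f_1^{g^{-1}})^{-1},\ldots,(f_m^{g^{-1}})^{-1})$. Evaluating the defining condition for $a^{-1}$ at a point $x$ and using that $a$ satisfies its own condition at the shifted point $x g^\pi$ (together with the fact, from Lemma~\ref{silly1}, that the relevant expressions lie in $S_\delta$ and that $\pi$ is a homomorphism) yields the required identity; the algebra is the mirror image of the closure computation and reduces to the same cancellation of a $\tau$-transversal factor with its inverse.

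The main obstacle is purely bookkeeping: keeping track of which argument the transversal map $\tau$ is applied to, since the action \eqref{eq:def} shifts the argument by $(g'^\pi)^{-1}$ and one must feed the correct shifted point into each factor's membership condition so that the intermediate $\tau$-terms cancel. There is no conceptual difficulty once the substitutions are made carefully; the identity of Lemma~\ref{silly1} guarantees that every expression that appears stays inside $S_\delta$, so the condition is well-posed throughout. I expect the verification to be a short but notation-heavy direct computation.
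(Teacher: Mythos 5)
Your proposal is correct and follows essentially the same route as the paper: the identity element and the closure computation (shifting the argument by $(g'^\pi)^{-1}$, applying each factor's membership condition, and cancelling the intermediate $\tau$-term) are exactly the paper's argument. The only difference is that you verify inverses explicitly, whereas the paper dispenses with this by noting that a nonempty multiplicatively closed subset of the finite group $L\ltimes V^m$ is automatically a subgroup; your explicit inverse formula and the check at the shifted point $xg^\pi$ are nonetheless correct.
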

\begin{proof}
Let $(g,f_1,\ldots,f_m),(g',f_1',\ldots,f_m')\in A$ and let $x\in S$. For every $i\in
\{1,\ldots,m\}$, we have 
\begin{eqnarray*}
((f_i^{g'}f_i')(\delta^x))^\pi&=&(f_i^{g'}(\delta^x))^\pi\cdot(f_i'(\delta^x))^\pi\\
&\overset{(\ref{eq:def})}{=}&(f_i({\delta^x}^{(g'^\pi)^{-1}}))^\pi\cdot(f_i'(\delta^x))^\pi\\
&\overset{\rm{Def.}~\ref{A}}{=}&(((x(g'^\pi)^{-1})(g^\pi)^{-1})^\tau g^\pi
((x(g'^\pi)^{-1})^\tau)^{-1})\cdot((x(g'^\pi)^{-1})^\tau g'^\pi(x^\tau)^{-1})\\
&=&(x((gg')^\pi)^{-1})^\tau (gg')^\pi(x^{\tau})^{-1}.
\end{eqnarray*}
Using~$(\ref{eq:binary})$ and Definition~\ref{A}, this shows that
$(g,f_1,\ldots,f_m)(g',f_1',\ldots,f_m')\in A$. Clearly, the identity of $L\ltimes V^m$ is contained
in $A$. Since $L\ltimes V^m$ is a finite group, this concludes the proof.
\end{proof}

\begin{lemma}\label{silly3}
The map $\varphi$ is a surjective homomorphism.
\end{lemma}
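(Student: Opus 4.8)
The plan is to prove that $\varphi\colon A\to L$, defined by $(g,f_1,\ldots,f_m)\mapsto g$, is a surjective homomorphism. The homomorphism property is essentially immediate from the semidirect product structure: by the multiplication rule~$(\ref{eq:binary})$, the first coordinate of a product $(g,f_1,\ldots,f_m)(g',f_1',\ldots,f_m')$ is $gg'$, so $\varphi$ restricted to $A$ (which we know is a group by Lemma~\ref{silly2}) manifestly respects multiplication. Thus the only real content is surjectivity.

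For surjectivity, I would argue as follows. Fix an arbitrary $g\in L$; the task is to exhibit functions $f_1,\ldots,f_m\in V$ so that $(g,f_1,\ldots,f_m)\in A$, i.e. so that the defining condition
\[
(f_i(\delta^x))^\pi=(x(g^\pi)^{-1})^\tau g^\pi(x^\tau)^{-1}
\]
holds for every $i$ and every $x\in S$. Since it suffices to define a single $f\in V$ and take $f_1=\cdots=f_m=f$, the heart of the matter is to produce one function $f\colon\Delta\to L_\lambda$ satisfying $(f(\delta^x))^\pi=(x(g^\pi)^{-1})^\tau g^\pi(x^\tau)^{-1}$ for all $x\in S$. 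Here I would use that $S$ is transitive on $\Delta$ (it is the image of the transitive group $L$ under $\pi$, acting on the orbit set $\Delta$, on which it is transitive since $K$ acts trivially and $L$ is transitive on the underlying set), so every element of $\Delta$ is of the form $\delta^x$ for some $x\in S$. The value we wish to assign, $(x(g^\pi)^{-1})^\tau g^\pi(x^\tau)^{-1}$, lies in $S_\delta$ by Lemma~\ref{silly1} (this is exactly the remark following Definition~\ref{A}).

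The main obstacle is \emph{well-definedness}: the prescription depends a priori on the choice of $x$ with $\delta^x$ equal to a given element, and I must check that if $\delta^{x}=\delta^{x'}$ then the prescribed $S_\delta$-values agree, i.e. $(x(g^\pi)^{-1})^\tau g^\pi(x^\tau)^{-1}=(x'(g^\pi)^{-1})^\tau g^\pi(x'^\tau)^{-1}$. When $\delta^x=\delta^{x'}$ we have $x'=sx$ for some $s\in S_\delta$, whence $x'^\tau=x^\tau$ and $(x'(g^\pi)^{-1})^\tau=(x(g^\pi)^{-1})^\tau$ (since $\tau$ only sees the coset $S_\delta x$), so both expressions coincide. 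Once the target value $(f(\delta^x))^\pi\in S_\delta$ is unambiguously determined, I would lift it back along $\pi$: recall $\pi|_{L_\lambda}\colon L_\lambda\to S_\delta$ is surjective (indeed $S_\delta\cong L_\lambda/K_\lambda$), so for each $\delta^x$ there is some element of $L_\lambda$ whose $\pi$-image is the prescribed value, and I set $f(\delta^x)$ to be any such element. This defines $f\in V$, and by construction $(g,f,\ldots,f)\in A$ with $\varphi(g,f,\ldots,f)=g$, proving surjectivity.
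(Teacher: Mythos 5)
Your proposal is correct and follows essentially the same route as the paper's proof: both reduce to constructing a single function $f\colon\Delta\to L_\lambda$ whose $\pi$-image realises the prescribed $S_\delta$-valued expression, verify well-definedness via the invariance of $\tau$ under left multiplication by $S_\delta$, and lift through the surjection $\pi|_{L_\lambda}\colon L_\lambda\to S_\delta$. The only cosmetic difference is that the paper fixes a global section $\varepsilon\colon S_\delta\to L_\lambda$ in advance rather than choosing preimages pointwise.
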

\begin{proof}
By~$(\ref{eq:binary})$, $\varphi$ is a homomorphism.   For each $s\in S_{\delta}$, choose an element
$s^\varepsilon$ of $L_{\lambda}$ with $(s^\varepsilon)^\pi=s$. Since $\pi:L_{\lambda}\to S_{\delta}$
is surjective, $\varepsilon:S_{\delta}\to L_{\lambda}$ is well-defined. Let $g\in L$ and define
$f_{g}:\Delta\to L_{\lambda}$ with 
$$f_g(\delta^x)=((x(g^\pi)^{-1})^{\tau}g^\pi(x^{\tau})^{-1})^\varepsilon,$$ 
for $x\in S$. First, note that, by Lemma~\ref{silly1},
$(x(g^\pi)^{-1})^{\tau}g^\pi(x^{\tau})^{-1}\in S_{\delta}$ and thus $f_g(\delta^x)\in L_{\lambda}$.
To see that $f_g$ is well-defined, note that for every $y\in S_{\delta}$, we have
$(yx)^\tau=x^{\tau}$ and $(yx(g^\pi)^{-1})^\tau=(x(g^\pi)^{-1})^\tau$, and hence 
$f_g(\delta^x)=f_g(\delta^{yx})$. Now
$$(f_g(\delta^x))^\pi=(((x(g^\pi)^{-1})^{\tau}g^\pi(x^{\tau})^{-1})^\varepsilon)^\pi=(x(g^\pi)^{-1})
^{\tau}g^\pi(x^{\tau})^{-1},$$ 
and hence $(g,f_g,\ldots,f_g)\in A$ and $(g,f_g,\ldots,f_g)^\varphi=g$, which concludes the proof. 
\end{proof}

\begin{lemma}\label{lemma:yes}
The kernel of $\varphi$ is $M$ and $M\cong K_{\lambda}^{|\Delta|m}$.
\end{lemma}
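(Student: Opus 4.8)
The plan is to establish the two assertions separately, starting with the identification of $\ker\varphi$. An element $(g,f_1,\dots,f_m)\in A$ lies in $\ker\varphi$ exactly when $g=1$, so I would substitute $g=1$ into the defining relation of $A$ from Definition~\ref{A} and watch it collapse. Since $g=1$ forces $g^\pi=1$ in $S$, and since the transversal is normalised so that $1^\tau=1$, the right-hand side becomes $(x\cdot 1)^\tau\,1\,(x^\tau)^{-1}=x^\tau(x^\tau)^{-1}=1$. Thus membership in $A\cap\ker\varphi$ is equivalent to the vanishing condition $(f_i(\delta^x))^\pi=1$ for every $i\in\{1,\dots,m\}$ and every $x\in S$.

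The key point is then transitivity: because $L$ is transitive on its underlying set and $\Delta$ is the set of $K$-orbits with $K\trianglelefteq L$, the induced group $S$ is transitive on $\Delta$, so as $x$ ranges over $S$ the point $\delta^x$ sweeps out all of $\Delta$. Hence the vanishing condition is equivalent to $f_i(\sigma)\in\ker(\pi|_{L_{\lambda}})=K_{\lambda}$ for every $\sigma\in\Delta$ and every $i$, which is precisely the condition defining $M$. This simultaneously shows $M\subseteq A$ and $\ker\varphi=M$.

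For the isomorphism, I would first observe that the twisting action in the multiplication~$(\ref{eq:binary})$ is trivial on $M$: taking $g'=1$ gives $g'^\pi=1$, so $(\ref{eq:def})$ yields $f^{g'}=f$ for all $f\in V$. Consequently the product of two elements of $M$ is computed coordinatewise by pointwise multiplication, and the map $(1,f_1,\dots,f_m)\mapsto(f_1,\dots,f_m)$ is an isomorphism from $M$ onto the direct product of $m$ copies of the group of functions $\Delta\to K_{\lambda}$ under pointwise multiplication. Since each such function group is isomorphic to $K_{\lambda}^{|\Delta|}$, I conclude $M\cong K_{\lambda}^{|\Delta|m}$.

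There is no substantive obstacle here; the argument is a direct verification. The only points demanding a little care are confirming that the normalisation $1^\tau=1$ really forces the right-hand side of the $A$-relation to become the identity, and invoking the transitivity of $S$ on $\Delta$ to turn the family of conditions indexed by $x\in S$ into a single condition ranging over all $\sigma\in\Delta$.
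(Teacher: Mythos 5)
Your proposal is correct and follows essentially the same route as the paper's own proof: a direct two-way verification that the condition defining $A$ collapses to $(f_i(\delta^x))^\pi=1$ when $g=1$, together with the observation that $\ker(\pi|_{L_\lambda})=K_\lambda$. You are merely more explicit than the paper on two points it leaves implicit, namely the transitivity of $S$ on $\Delta$ (so that $\delta^x$ ranges over all of $\Delta$) and the fact that the twisted multiplication becomes coordinatewise on $M$, giving $M\cong K_{\lambda}^{|\Delta|m}$.
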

\begin{proof}
It is clear that $M\cong K_{\lambda}^{|\Delta|m}$. Suppose first that $(g,f_1,\ldots,f_m)$ is in the
kernel of $\varphi$ then $g=(g,f_1,\ldots,f_m)^\varphi=1$. For every $i\in \{1,\ldots,m\}$ and every
$x\in S$, it follows by Definition~\ref{A} that
$(f_i(\delta^x))^\pi=(x(g^\pi)^{-1})^{\tau}g^\pi(x^\tau)^{-1}=1$ and thus $f_i(\delta^x)\in
K_{\lambda}$. It follows that $(g,f_1,\ldots,f_m)\in M$.

Conversely, if $(g,f_1,\ldots,f_m)\in M$ then $g=1$ and $f_i(\delta^x)\in K_{\lambda}$ for every
$i\in \{1,\ldots,m\}$ and every $x\in S$ and thus 
$(f_i(\delta^x))^\pi=1=(x(g^\pi)^{-1})^{\tau}g^\pi(x^\tau)^{-1}$. In particular,
$(g,f_1,\ldots,f_m)\in A$ and hence $(g,f_1,\ldots,f_m)$ is in the kernel of $\varphi$.
\end{proof}

\begin{lemma}\label{silly4}
The set $C$ is a subgroup of $A$, $M$ is the core of $C$ in $A$ and the permutation group induced by
the action of $A$ on the right cosets of $C$ is permutation isomorphic to $L$.
\end{lemma}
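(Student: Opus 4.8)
The plan is to establish the three claims of Lemma~\ref{silly4} in turn, leveraging the homomorphism $\varphi$ from Lemma~\ref{silly3} and the identification of its kernel in Lemma~\ref{lemma:yes}. First I would show that $C$ is a subgroup of $A$. By definition $C=\{(g,f_1,\ldots,f_m)\in A\mid g\in L_\lambda\}=\varphi^{-1}(L_\lambda)$, the preimage under $\varphi$ of the subgroup $L_\lambda\leq L$. Since $\varphi$ is a homomorphism by Lemma~\ref{silly3}, the preimage of any subgroup is a subgroup, so $C\leq A$ is immediate.

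Next I would identify the core of $C$ in $A$, i.e.\ the largest normal subgroup of $A$ contained in $C$. The key observation is that $M=\ker\varphi$ is normal in $A$ (again by Lemma~\ref{silly3}), and $M\subseteq C$ since $M=\varphi^{-1}(1)\subseteq\varphi^{-1}(L_\lambda)=C$. So $M$ is a normal subgroup of $A$ contained in $C$; it remains to check it is the largest such. For this I would use the correspondence theorem: $\varphi$ induces an isomorphism $A/M\cong L$ (as $\varphi$ is surjective onto $L$ by Lemma~\ref{silly3} with kernel $M$), under which $C/M$ corresponds to $L_\lambda$. The core of $C$ in $A$ corresponds under this isomorphism to the core of $L_\lambda$ in $L$. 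Since $K$ is the kernel of the action of $L$ on $\Delta$ and $K\leq L_\lambda$ would require $K$ to fix $\lambda$, which it does not (as $K$ is transitive, hence nontrivial, on the orbit $\delta$), the core of $L_\lambda$ in $L$ is contained in $K$ but also must be a subgroup of $L_\lambda$ normal in $L$. The main subtlety here is checking that this core pulls back to exactly $M$ rather than something larger: I expect the core of $L_\lambda$ in $L$ to be trivial or at least contained in $K_\lambda$ in the appropriate sense, but the cleanest route is to argue directly that any normal subgroup $N\trianglelefteq A$ with $N\subseteq C$ must satisfy $\varphi(N)\trianglelefteq L$ with $\varphi(N)\leq L_\lambda$, and then show $\varphi(N)=1$, forcing $N\subseteq\ker\varphi=M$.

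To complete that middle step, I would verify that $L_\lambda$ is core-free in $L$, or more precisely that the construction forces $\varphi(N)=1$. Here I would invoke that $L$ acts on $\Delta$ with $S$ the induced group and $S_\delta$ the stabiliser of $\delta$; since $L$ is transitive and $\lambda$ is an arbitrary point, a normal subgroup of $L$ contained in $L_\lambda$ fixes every point of the $L$-orbit of $\lambda$, which is all of the underlying set, hence is trivial because $L$ is faithful as a permutation group. Thus $\varphi(N)=1$ and $N\subseteq M$, establishing that $M$ is precisely the core of $C$ in $A$.

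Finally, the permutation group induced by the action of $A$ on the right cosets of $C$ is, by the orbit--stabiliser correspondence and the fact that $M$ is the kernel of this action (being the core of the point stabiliser $C$), permutation isomorphic to the action of $A/M\cong L$ on the cosets of $C/M\cong L_\lambda$. Since the action of $L$ on the right cosets of $L_\lambda$ is permutation isomorphic to the original action of $L$ on its underlying set (as $L_\lambda$ is the stabiliser of the point $\lambda$ and $L$ is transitive), the induced action is permutation isomorphic to $L$, as required. The main obstacle is the core computation in the second paragraph; once the faithfulness-of-$L$ argument pins down that the core of $L_\lambda$ is trivial, the remaining identifications are routine applications of the correspondence theorem and the standard description of transitive actions via coset spaces.
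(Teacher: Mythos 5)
Your proposal is correct and follows essentially the same route as the paper: identify $C$ as $\varphi^{-1}(L_\lambda)$, use that $M=\ker\varphi$ and that $L_\lambda$ is core-free in $L$ (since $L$ is a faithful transitive permutation group) to conclude $M$ is the core of $C$ in $A$, and then identify the coset action of $A$ on $C$ with that of $L$ on $L_\lambda$, i.e.\ with $L$ itself. The brief detour involving $K$ and $K_\lambda$ in your second paragraph is unnecessary, but you correctly abandon it in favour of the direct core-free argument, which is exactly what the paper uses.
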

\begin{proof}
By Lemmas~\ref{silly3} and~\ref{lemma:yes}, $\varphi$ is a surjective homomorphism with kernel $M$.
In particular, $M\unlhd A$ and $A/M\cong L$. Note that $C$ is the pre-image of $L_{\lambda}$ under
$\varphi$ and thus $M\leq C\leq A$. As $C^{\varphi}=L_{\lambda}$ and $L_{\lambda}$ is core-free in
$L$, it follows that $M$ is the core of $C$ in $A$. Finally, the action of $A$ on the right cosets
of $C$ is permutation isomorphic to the action of $A^{\varphi}=L$ on the right cosets of
$C^{\varphi}=L_\lambda$, that is, to $L$.
\end{proof} 

We now introduce an alternative notation for the elements of $A$ that will simplify some later
computations. Let $a=(g,f_1,\ldots,f_m)\in A$. For $i\in\{1,\ldots,m\}$, we write $g_i=f_i(\delta)$
and $h_{i-1}={f_i}|_{ \Delta\setminus\{\delta\}}$. (Note that $f_i$ is completely determined by
$(g_i,h_{i-1})$.) We also write $g_0=g$ and then denote $a$ by 
\begin{equation*}
((g_0,\ldots,g_m),(h_0,\ldots,h_{m-1})).
\end{equation*}
Note that, with this notation, the multiplication is not component-wise (in contrast to
$(\ref{eq:binary})$): indeed, if $a'=((g_0',\ldots,g_m'),(h_0',\ldots,h_{m-1}'))$ is another element
of $A$ then
\begin{equation}\label{eq:gogo3}
aa'=((g_0g_0',g_1^{g_0'}g_1'\ldots,g_m^{g_0'}g_m'),(h_0^{g_0'}h_0',\ldots,h_{m-1}^{g_0'}h_{m-1}')).
\end{equation}
 Using the above notation, for each $r\in R$ and $c=((g_0,\ldots,g_m),(h_0,\ldots,h_{m-1}))\in C$, 
let
\begin{equation} c^r = ((g_{0r^{-1}},g_{1 r^{-1}}, \dots, g_{m r^{-1}}),
(h_{0r^{-1}},\dots,h_{(m-1)r^{-1}})),  \label{LA}\end{equation}
where, for $i\in \{0,\dots,m\}$, we denote the image of $i$ under $r$ by $ir$.

\begin{lemma}\label{lemma:action}
Equation~\eqref{LA} defines a group action of $R$ on the group $C$.
\end{lemma}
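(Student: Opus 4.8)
The statement to prove is that Equation~\eqref{LA} defines a group action of $R$ on $C$. This requires verifying three things: first, that $c^r$ lands in $C$ whenever $c \in C$; second, that the identity of $R$ acts trivially; and third, that $(c^r)^{r'} = c^{rr'}$ for all $r, r' \in R$ and $c \in C$. The action in~\eqref{LA} is simply a permutation of the coordinates of $c = ((g_0,\ldots,g_m),(h_0,\ldots,h_{m-1}))$ by $r \in R$, acting on the index set. But there is a crucial subtlety I must track: the index $0$ plays a special role — it carries $g_0 = g \in L_\lambda$ — and the indices $1,\ldots,m$ correspond to the coordinates of $V^m$, while $m$ is fixed by $R$ by construction. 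Let me think carefully about exactly what is being permuted.

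First I would pin down the index sets. The $g$-tuple has indices $\{0,1,\ldots,m\}$ and the $h$-tuple has indices $\{0,1,\ldots,m-1\}$. Recall that $R$ acts on $\{0,\ldots,m-1\} \cong \Omega$ and fixes $m$, so $R$ acts on $\{0,\ldots,m\}$. The formula sends the coordinate at position $i$ to position $ir$, i.e. the new $i$-th coordinate is the old $(ir^{-1})$-th coordinate. Since $R$ fixes $m$, the special coordinate $g_m$ is undisturbed, and since $0 \in \{0,\ldots,m-1\}$ is a genuine point of $\Omega$ moved around by $R$, the coordinate $g_0 = g$ gets permuted among the first $m$ positions. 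The well-definedness of the $h$-reindexing is immediate because $R$ preserves $\{0,\ldots,m-1\}$.

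The genuinely substantive step, and the one I expect to be the main obstacle, is \emph{closure}: showing $c^r \in C$. An element lies in $C$ iff it lies in $A$ and its $\varphi$-image is in $L_\lambda$, i.e. iff $g_0 \in L_\lambda$ together with the defining congruences of Definition~\ref{A} holding for each coordinate function. After applying $r$, the coordinate that now sits in position $0$ is some old $g_{j}$ with $j = 0 \cdot r^{-1}$, which need not equal $0$; I must check that this element still lies in $L_\lambda$ and that the permuted data still satisfies the $A$-condition. The key observation is that the defining condition of $A$, namely $(f_i(\delta^x))^\pi = (x(g^\pi)^{-1})^\tau g^\pi (x^\tau)^{-1}$, is imposed \emph{uniformly over all} $i \in \{1,\ldots,m\}$, with an identical right-hand side independent of $i$; this uniformity is exactly what makes permuting the coordinates preserve membership. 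Since $c \in C$ means $g_0 = g \in L_\lambda$, so $g^\pi \in S_\delta$, the common right-hand side is forced (set $x$ appropriately) to be compatible, and each $g_i = f_i(\delta)$ satisfies $(g_i)^\pi = g^\pi$; hence all the $g_i$ lie in the same coset $K g$, and in particular every coordinate shares the same $\pi$-image, so shuffling them among positions $1,\ldots,m$ leaves the $A$-congruences intact. I would spell out why $g_0 \in L_\lambda$ is preserved: the reindexing only moves entries among coordinates that all have the same image under $\pi \circ \varphi$, so the new position-$0$ entry is again in $L_\lambda$.

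Once closure is established, the remaining two axioms are routine. The identity check is immediate since $i \cdot 1^{-1} = i$. For compatibility, I would unwind $(c^r)^{r'}$: the coordinate in position $i$ of $c^r$ is the position-$(ir^{-1})$ coordinate of $c$, so the position-$i$ coordinate of $(c^r)^{r'}$ is the position-$(ir'^{-1})$ coordinate of $c^r$, which is the position-$(ir'^{-1}r^{-1}) = (i(rr')^{-1})$ coordinate of $c$ — exactly the position-$i$ coordinate of $c^{rr'}$. This is just associativity of the $R$-action on the index set $\{0,\ldots,m\}$ transported to the coordinates, and it should be dispatched in a line or two. The one point to state carefully throughout is that $R$ genuinely acts on $\{0,\ldots,m\}$ (fixing $m$), so all the reindexings are well-defined bijections and no coordinate is lost or duplicated.
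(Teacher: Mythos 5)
Your treatment of closure is sound and matches the paper's: the key observation that for $c\in C$ every $g_i=f_i(\delta)$ satisfies $g_i^\pi=g_0^\pi$ (the paper's equation~\eqref{STAR}) is exactly what makes the right-hand side of the defining condition of $A$ insensitive to which coordinate ends up in position $0$, and the uniformity of that condition over $i$ handles the rest. The identity and compatibility axioms are indeed one-liners.

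There is, however, a genuine gap: you never verify that each $r\in R$ acts as a group \emph{automorphism} of $C$, i.e.\ that $(cd)^r=c^rd^r$. This is not optional, since the lemma is immediately used to form the semidirect product $C\rtimes R$, so ``a group action of $R$ on the group $C$'' must mean an action by automorphisms. Nor is it automatic: in the coordinates $((g_0,\ldots,g_m),(h_0,\ldots,h_{m-1}))$ the multiplication is \emph{not} component-wise --- by~\eqref{eq:gogo3} one has $aa'=((g_0g_0',g_1^{g_0'}g_1',\ldots,g_m^{g_0'}g_m'),(h_0^{g_0'}h_0',\ldots,h_{m-1}^{g_0'}h_{m-1}'))$, with every coordinate twisted by the zeroth coordinate of the second factor, and a permutation of coordinates has no reason to commute with such a twisted product. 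This verification is in fact the bulk of the paper's proof. It goes through for two reasons, both specific to $C$: writing $d=((y_0,\ldots,y_m),(z_0,\ldots,z_{m-1}))\in C$, one has $y_0\in L_\lambda$, so $g_j^{y_0}=f_j(\delta^{(y_0^\pi)^{-1}})=f_j(\delta)=g_j$ and the twist is invisible on the $g$-coordinates; and on the $h$-coordinates the twist in $c^rd^r$ is by $y_{0r^{-1}}$ rather than $y_0$, but $y_{0r^{-1}}^\pi=y_0^\pi$ by the same identity~\eqref{STAR}, so $h_{(i-1)r^{-1}}^{y_{0r^{-1}}}=h_{(i-1)r^{-1}}^{y_0}$ and the two sides agree. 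Without this computation the semidirect product $C\rtimes R$ is not defined and the lemma is not proved; you should add it.
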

\begin{proof}
In this proof, it is convenient to use both notations for elements of $C$.  Let $c =
(g,f_1,\dots,f_m)=((g_0,\ldots,g_m),(h_0,\ldots,h_{m-1})) \in C$. Since $c \in C$, we have $g_0=g
\in L_\lambda$ and hence $g^\pi \in S_\delta$. Since $c \in A$, for every $x\in S_\delta$ and $i\in
\{1,\dots,m\}$, we have $x^\tau=1=(x (g^\pi)^{-1})^\tau$ and thus
\begin{equation}  g_i^\pi =(f_i(\delta))^\pi = (f_i(\delta^x))^\pi \overset{\rm{Def.}~\ref{A}}{=} (x
(g^\pi)^{-1})^\tau g^\pi (x^\tau)^{-1} =  g^\pi.\label{STAR} \end{equation}

Let $r \in R$ and write $v=r^{-1}$ and $c^r = (g_{0v},f_1',\dots,f_m')$. We first show that $c^r\in
C$. For every $x\in S_\delta$ and $i\in \{1,\dots,m\}$, we have 
$$(f_i'(\delta^x))^\pi = (f_i'(\delta))^\pi \overset{(\ref{LA})}{=} (g_{iv})^\pi 
\overset{(\ref{STAR})}{=} g_{0v}^\pi = (x (g_{0v}^\pi)^{-1})^\tau g_{0v}^\pi (x^\tau)^{-1} $$
where in the last equality  $x,g_{0v}^\pi\in S_\delta$ is used. Similarly, for every $x\in S
\setminus S_\delta$ and $i\in \{1,\dots,m\}$, we have
$$(f_i'(\delta^x))^\pi \overset{(\ref{LA})}{=} (h_{(i-1)v}(\delta^x))^\pi
\overset{\rm{Def.}~\ref{A}}{=}  (x (g^\pi)^{-1})^\tau g^\pi (x^\tau)^{-1} \overset{(\ref{STAR})}{=}
(x (g_{0v}^\pi)^{-1})^\tau g_{0v}^\pi (x^\tau)^{-1}.$$
This shows that $c^r \in A$. Since $g_i \in L_\lambda$ for all $i \in \{0,\dots,m\}$, we have that
$g_{0v}\in L_\lambda$ and thus $c^r \in C$. 
%It follows immediately that~$(\ref{LA})$ defines an action of $U$ on the underlying set of $C$.
%
%
%
%
Let $d=((y_0,y_1,\dots,y_m),(z_0,\dots,z_{m-1})) \in C$. Recall that $y_0\in L_\lambda$. Hence, for
$j \in \{1,\dots,m\}$, we have
\begin{equation}  
g_j^{y_0} =f_j^{y_0}(\delta)\overset{(\ref{eq:def})}{=} f_j(\delta^{(y_0^\pi)^{-1}}) = f_j(\delta)=
g_j. \label{STARSTAR} 
\end{equation}
%Using~$(\ref{eq:def})$, we see that $g_0^{y_i}=g_0$ and thus $(\ref{STARSTAR})$ holds for $j \in
%\{0,\dots,m\}$ (THINK ABOUT THIS).
Now, 
\begin{eqnarray*}
cd &\overset{(\ref{eq:gogo3})}{=} &((g_0y_0, g_1^{y_0}y_1,\ldots,g_m^{y_0}y_m),(h_0^{y_0}
z_0,\ldots,h_{m-1}^{y_0} z_{m-1}))\\
&\overset{(\ref{STARSTAR})}{=}& ((g_0y_0,g_1y_1,\ldots,g_m y_m),(h_0^{y_0} z_0,\ldots,h_{m-1}^{y_0}
z_{m-1}))
\end{eqnarray*}
and thus
$$(cd)^r = ((g_{0v} y_{0v},g_{1v}y_{1v},\ldots,g_{mv} y_{mv}),(h_{0 v}^{y_0}
z_{0v},\ldots,h_{(m-1)v}^{y_0} z_{(m-1)v})).$$

Recall that $c^r= ((g_{0v},\ldots, g_{m v}), (h_{0v},\ldots, h_{(m-1)v}))= (g_{0v},f_1',\dots,f_m')$
and thus $f_i'(\delta) = g_{iv}$ and $f_i'(\sigma) = h_{(i-1)v}(\sigma)$ for $i \in \{1,\dots,m\}$
and $\sigma \in \Delta \setminus \{\delta\}$. Similarly, recall that $d^r = ((y_{0v},\ldots, y_{m
v}), (z_{0v},\ldots, z_{(m-1)v}))$ and write $d^r=(y_{0v},e_1',\dots,e_m')$. 
By~$(\ref{eq:binary})$, we have $c^rd^r = (g_{0v}y_{0v},f_1'^{y_{0v}} e_1',\dots,f_m'^{y_{0v}}
e_m')$. Since $y_{0v} \in L_\lambda$, we have $y_{0v}^\pi \in S_\delta$ and hence, for $i\in
\{1,\dots,m\}$, we have
$$ (f_i'^{y_{0v}} e_i') (\delta) = f_i'^{y_{0v}}(\delta)e_i'(\delta) \overset{(\ref{eq:def})}{=}
f_i'(\delta^{(y_{0v}^\pi)^{-1}}) y_{iv} = f_i'(\delta) y_{iv} = g_{iv} y_{iv}.$$
Similarly, for $\sigma \in \Delta \setminus \{ \delta \}$ and $i\in \{1,\dots,m\}$, we have
$$f_{i}'^{y_{0v}}(\sigma) \overset{(\ref{eq:def})}{=} f_{i}'(\sigma^{ (y_{0v}^\pi)^{-1}})= h_{(i-1)
v}(\sigma^{ (y_{0v}^\pi)^{-1}}) \overset{(\ref{STAR})}{=} h_{(i-1) v}(\sigma^{ (y_0^\pi)
^{-1}})\overset{(\ref{eq:def})}{=} h_{(i-1) v}^{y_{0}}(\sigma)$$
and thus $(f_{i}'^{y_{0v}}e_i')(\sigma)=(h_{(i-1) v}^{y_{0}}z_{(i-1)v})(\sigma)$. This shows that
$(cd)^r = c^r d^r$. 

It is clear from (\ref{LA}) that $(c^r)^{r'}=c^{rr'}$ for every $r'\in R$ and $c^r=1$ if and only if
$c=1$, which concludes the proof. 
\end{proof}

By Lemma~\ref{lemma:action}, we can define the semidirect product $C\rtimes R$. Let 
\begin{align*}
&P_2=C\rtimes R,\\
&B=C\rtimes R_1,
\end{align*}
with $B$  viewed as a subgroup of $P_2$. From our definitions we have:

\begin{lemma}\label{retard}
The core of $B$ in $P_2$ is $C$. Moreover, the permutation group induced by the action of $P_2$ on
the right cosets of $B$ is permutation isomorphic to $R$. 
\end{lemma}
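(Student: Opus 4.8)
The plan is to exploit the semidirect product structure $P_2 = C \rtimes R$ together with the fact that $B = C \rtimes R_1$ contains $C$. First I would record the two structural facts on which everything rests. Since $C$ is the base group of the semidirect product $P_2 = C \rtimes R$, it is normal in $P_2$; and since $R_1 \le R$ acts on $C$ by restriction of the action defined in Lemma~\ref{lemma:action}, the set $B = C \rtimes R_1$ is genuinely a subgroup of $P_2$ containing $C$. Thus $C \trianglelefteq P_2$ and $C \le B$, so $C$ is a normal subgroup of $P_2$ contained in $B$; this gives the inclusion $C \le \mathrm{core}_{P_2}(B)$ for free.

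Next I would identify the induced action. Because $C$ is normal in $P_2$ and contained in $B$, the natural projection $q \colon P_2 \to P_2/C \cong R$ carries $B$ onto $B/C \cong R_1$, and the action of $P_2$ on the right cosets of $B$ factors through $q$: the right cosets of $B$ in $P_2$ correspond $P_2$-equivariantly to the right cosets of $B/C$ in $P_2/C$, so this action is permutation isomorphic to the action of $R = P_2/C$ on the right cosets of $R_1 = B/C$. Since $R = L_2$ is transitive of degree $m_2$ with $R_1$ the stabiliser of the point $1$, the coset space $R/R_1$ is $R$-equivalent to $\{1,\ldots,m_2\}$, so the induced permutation group is precisely $R$ in its given action. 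This is the second assertion.

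Finally, for the core, I would note that $\mathrm{core}_{P_2}(B)$ is exactly the kernel of the action of $P_2$ on the right cosets of $B$. By the previous paragraph this kernel equals $q^{-1}(\mathrm{core}_R(R_1))$; but $R$, being a permutation group, acts faithfully on $\{1,\ldots,m_2\}$, so $\mathrm{core}_R(R_1) = 1$ and hence the kernel is $q^{-1}(1) = C$. Together with the inclusion from the first paragraph, this yields $\mathrm{core}_{P_2}(B) = C$. I do not anticipate a genuine obstacle here, as the statement is a direct consequence of the semidirect product structure and the correspondence theorem; the only points requiring care are the bookkeeping of the identifications $P_2/C \cong R$ and $B/C \cong R_1$, and the verification that $R_1$ really is a point stabiliser of the degree-$m_2$ action, so that the index $[R:R_1]$ equals $m_2$ and the induced action is the natural action of $R$ rather than some other transitive action.
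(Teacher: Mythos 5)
Your proof is correct, and it simply spells out the verification that the paper itself leaves implicit: Lemma~\ref{retard} is stated with no proof beyond the remark ``From our definitions we have,'' and the intended justification is exactly the one you give, namely that $C$ is a normal subgroup of $P_2=C\rtimes R$ contained in $B=C\rtimes R_1$, so the coset action factors through $P_2/C\cong R$ acting on $R/R_1$, which is the given faithful transitive action of $R$ with trivial core. Nothing is missing; your care about identifying $R_1$ as the point stabiliser so that $R/R_1$ recovers the natural degree-$m_2$ action is the only point of substance, and you handle it correctly.
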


From $(\ref{LA})$, $R_1$ inherits an action on $C$ from $R$. We extend this to an action of $R_1$ on
$A$ in the following way:
given $a=((g_0,\ldots,g_m),(h_0,\ldots,h_{m-1}))\in A$ and $r \in R_1$, let
$$a^r =((g_{0r^{-1}},g_{1 r^{-1}}, \dots, g_{m r^{-1}}), (h_{0r^{-1}},\dots,h_{(m-1)r^{-1}})).$$
%We show that $a^\iota\in A$. %Write $g=c_ {\omega^{\iota^{-1}}}$ and
%$f_i=(c_{\omega_i^{\iota^{-1}}},d_{\omega_{i-1}}^{\iota^{-1}})$ for $i\in \{1,\ldots,m\}$. Using
%our preliminary notation, we need to verify that $(g,f_1,\ldots,f_m)\in C$.
With minor changes, the proof of Lemma~\ref{lemma:action} can be adapted to show that this induces a
group action of $R_1$ on $A$. (It is helpful to notice that for all $r \in R_1$ we have $0r=0$.) Let
$$P_1=A\rtimes R_1.$$
We view $B$ as a subgroup of $P_1$ in the obvious way. (Note that the action of $R$ on $C$ cannot be
extended to an action of $R$ on $A$ in any meaningful way.)

\begin{lemma}\label{moreretarded}
The core of $B$ in $P_1$ is $M\rtimes R_1$ and the action of $P_1$ on the right cosets of $B$ is
permutation isomorphic to $L$.
\end{lemma}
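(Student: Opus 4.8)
The plan is to copy the proof of Lemma~\ref{silly4} almost verbatim, with the homomorphism $\varphi\colon A\to L$ replaced by an extension $\Phi\colon P_1\to L$. Since $P_1=A\rtimes R_1$ writes each of its elements uniquely as $ar$ with $a\in A$ and $r\in R_1$, I define $\Phi(ar)=\varphi(a)$. To see that $\Phi$ is a homomorphism, the one thing to check is that $\varphi$ is invariant under the action of $R_1$ on $A$: for $r\in R_1$ we have $0r=0$, so in the expression $a^r=((g_{0r^{-1}},\dots,g_{mr^{-1}}),(h_{0r^{-1}},\dots,h_{(m-1)r^{-1}}))$ the entry $g_{0r^{-1}}$ equals $g_0$, whence $\varphi(a^r)=g_0=\varphi(a)$. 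The $A$-component of any product $(a_1r_1)(a_2r_2)$ in $P_1$ is $a_1$ times an $R_1$-translate of $a_2$, so applying $\varphi$ (a homomorphism on $A$ by Lemma~\ref{silly3}) and using this invariance gives $\Phi((a_1r_1)(a_2r_2))=\varphi(a_1)\varphi(a_2)=\Phi(a_1r_1)\Phi(a_2r_2)$.

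Once $\Phi$ is known to be a homomorphism, everything else is formal. It is surjective because $\varphi$ is (Lemma~\ref{silly3}), and its kernel is $\{ar\mid a\in\ker\varphi\}=M\rtimes R_1$ by Lemma~\ref{lemma:yes}; in particular $M\rtimes R_1\unlhd P_1$ and $P_1/(M\rtimes R_1)\cong L$. Since $C=\varphi^{-1}(L_\lambda)$, we have $\Phi(B)=\varphi(C)=L_\lambda$ and $\ker\Phi=M\rtimes R_1\leq C\rtimes R_1=B$, so $B=\Phi^{-1}(L_\lambda)$ is a full preimage containing $\ker\Phi$. The standard correspondence then gives $\mathrm{core}_{P_1}(B)=\Phi^{-1}(\mathrm{core}_L(L_\lambda))$, and since $L_\lambda$ is core-free in $L$ (as already used in Lemma~\ref{silly4}) this equals $\Phi^{-1}(1)=\ker\Phi=M\rtimes R_1$, proving the first assertion. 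For the second, because $B$ is a full $\Phi$-preimage containing $\ker\Phi$, the action of $P_1$ on the right cosets of $B$ is permutation isomorphic to the action of $L=\Phi(P_1)$ on the right cosets of $L_\lambda=\Phi(B)$, which is the natural transitive action of $L$.

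I expect the only genuinely delicate step to be the verification that $\Phi$ is a homomorphism, which rests entirely on the identity $0r=0$ for $r\in R_1$ forcing $\varphi$ to be $R_1$-invariant; the remaining steps are routine applications of the correspondence theorem that parallel Lemma~\ref{silly4}. A minor point worth noting in passing is that $M\rtimes R_1$ is a well-defined subgroup of $P_1$, which is immediate once it is identified as $\ker\Phi$.
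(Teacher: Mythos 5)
Your proof is correct and is essentially the computation the paper alludes to with its one-line proof (``follows with a computation and from Lemma~\ref{silly4}''): the key point, that $\varphi$ is $R_1$-invariant because $0r=0$ for $r\in R_1$, is exactly the observation the paper flags when defining the $R_1$-action on $A$, and the rest is the correspondence-theorem argument of Lemma~\ref{silly4} transported along the extended homomorphism $\Phi$. No gaps; you have simply written out the details the authors omit.
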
   
\begin{proof}
The proof follows with a computation and from Lemma~\ref{silly4}.
\end{proof}

Let $\mathcal{A}$ be the rank two amalgam of the groups $P_1$ and $P_2$ with $\mathcal{B}(\mathcal
A)=P_1\cap P_2=B$. Lemmas~\ref{retard} and~\ref{moreretarded} show that the permutation type of
$\mathcal{A}$ is  $[L_1,L_2]$.
% We now show that $\mathcal{A}$ is faithful.
% 
% 
\begin{proposition}\label{neverends}
The amalgam $\mathcal{A}$ is faithful.
\end{proposition}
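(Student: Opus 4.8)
The plan is to argue by contradiction: suppose $N$ is a nontrivial subgroup of $B=\mathcal{B}(\mathcal A)$ normalised by both $P_1$ and $P_2$, and aim to force $N=1$. The first step is a core reduction. Since $N\le B\le P_1$ and $N$ is normalised by $P_1$, we have $N\trianglelefteq P_1$, so $N$ lies in the core of $B$ in $P_1$, which by Lemma~\ref{moreretarded} equals $M\rtimes R_1$. Likewise $N\trianglelefteq P_2$ gives $N\le\operatorname{core}_{P_2}(B)=C$ by Lemma~\ref{retard}. Viewing both $C$ and $M\rtimes R_1$ inside $P_1=A\rtimes R_1$ (with $C=\{(c,1):c\in C\}$, and $M\le C\le A$), an element of the intersection must have trivial $R_1$-part and $C$-component lying in $M$; hence $(M\rtimes R_1)\cap C=M$ and therefore $N\le M$. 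This reduces the whole statement to showing that an $N\le M$ normalised by $P_1$ and $P_2$ is trivial.

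Next I would exploit the two explicit actions on $M$. As $A\le P_1$ and $C\rtimes R=P_2$, the subgroup $N\le M$ is invariant both under conjugation by $A$ and under the action of $R$ from~\eqref{LA}. By Lemma~\ref{lemma:yes}, $M\cong K_\lambda^{|\Delta|m}$, and I would record exactly how each action permutes coordinates. Conjugation by $a=(g,f_1,\dots,f_m)\in A$ preserves each of the $m$ functional components $f_i$ and, within a component, permutes the $\Delta$-coordinates through $g^\pi\in S$ while conjugating the values by $f_i(\sigma)$ (this follows from~\eqref{eq:def} and~\eqref{eq:binary}); taking $a\in M$ realises arbitrary coordinatewise $K_\lambda$-conjugations, and letting $g$ range over $L$ shows that the transitive group $S$ acts on the $\Delta$-coordinates. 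The action~\eqref{LA} of $R$ instead permutes the component indices, and crucially it links the $\delta$-value $g_i$ (index $i$) of a component with its off-$\delta$ part $h_{i-1}$ (index $i-1$); the indices $0$ (carrying the $L$-part) and $m$ (fixed by $R$) play a distinguished role here.

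Finally I would turn the containment $N\le M$ into $N=1$. The condition $N\le M$ says the index-$0$ coordinate (the $L$-part) of every element of $N$ is trivial. Applying~\eqref{LA} spreads this triviality across the entire $R$-orbit of the index $0$, while conjugation by $A$ interchanges $\delta$-coordinates and off-$\delta$-coordinates within a component; combining these with the coupling supplied by the fixed index $m$ and by the one-step shift between the $g_i$ and $h_{i-1}$ labellings, the triviality should propagate to every coordinate of $M$, yielding $N=1$ and the desired contradiction. The hypothesis that $L_1$ is not semiprimitive enters precisely at two points: $K$ non-transitive gives $|\Delta|\ge2$, so that off-$\delta$ coordinates exist for the $A$-moves to act on, and $K$ non-semiregular gives $K_\lambda\neq1$, so that $M$ is a nontrivial product of copies of $K_\lambda$.

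The main obstacle is this last propagation step. The two actions do not share a common block structure: $R$ permutes the $m$ components with the awkward shift between the $g$- and $h$-indices and the anomalous indices $0$ and $m$, whereas $A$ permutes the $\Delta$-coordinates inside each fixed component. Carefully tracking how simultaneous invariance under both actions, seeded by the single vanishing coordinate forced by $N\le M$, carries across this interleaved indexing—so that no residual family of coordinates can survive as an invariant subgroup—is the genuine combinatorial heart of the proof, and the part where the greatest care will be required.
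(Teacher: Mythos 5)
You reproduce the paper's opening reduction exactly: from Lemmas~\ref{retard} and~\ref{moreretarded} one gets $N\leq C\cap(M\rtimes R_1)=M$, and you correctly identify the two actions that must be played off against each other (conjugation by $A$, and the $R$-action of~\eqref{LA}). But the argument stops precisely where the real work begins: you assert that the triviality of the index-$0$ coordinate ``should propagate to every coordinate'' and then, in your closing paragraph, explicitly concede that carrying this propagation across the interleaved indexing is ``the genuine combinatorial heart of the proof'' and is not done. That step is not a routine verification, so the proposal as written is an outline with an acknowledged gap rather than a proof.

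Two concrete ingredients are missing. First, a ``one value implies all values'' lemma (the paper's Claim~1): if every $(1,f_1,\dots,f_m)\in N$ satisfies $f_i(\sigma)=1$ at a single $\sigma\in\Delta$, then $f_i\equiv 1$; this is proved by conjugating by an element $(g,f_1',\dots,f_m')\in A$ lying over an arbitrary $g\in L$ (which exists by the surjectivity of $\varphi$, Lemma~\ref{silly3}) and using the transitivity of $S$ on $\Delta$. It yields the key equivalence $G(i)\Longleftrightarrow H(i-1)$ between triviality of the $\delta$-value and triviality of the off-$\delta$ part of the $i$-th component. Second, the propagation from index $0$ to all of $\{0,\dots,m\}$ is a minimal-counterexample argument: if $z$ is minimal with the $g_z$-coordinate not forced trivial, then $R$-invariance forces $z$ to be the minimum of its $R$-orbit, and the specific block structure of the $R$-orbits on $\Omega$ (this is exactly why $\Omega$ was built as $\ell$ stacked copies of $\{1,\dots,m_2\}$) places $z-1$ and $z-2$ in a common $R$-orbit, after which the shift $G(i)\Longleftrightarrow H(i-1)$ contradicts minimality. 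Without this explicit analysis of the orbit structure nothing rules out an invariant residual family of coordinates, which is precisely the obstacle you name but do not overcome.
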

\begin{proof}
Let $N$ be a subgroup of $B$ normal in $P_1$ and in $P_2$. We show that $N=1$. By
Lemma~\ref{retard}, the core of $B$ in $P_2$ is $C$ and hence $N\leq C$. By
Lemma~\ref{moreretarded}, the core of $B$ in $P_1$ is $M\rtimes R_1$ and thus $N\leq C\cap (M\rtimes
R_1)=M$.

For $i\in\{0,\ldots,m\}$, let $G(i)$ be the proposition: for every 
$((g_0,\ldots,g_m),(h_0,\ldots,h_{m-1}))\in N$, we have $g_i=1$ . Similarly, for $i
\in\{0,\ldots,m-1\}$, let $H(i)$ be the proposition: for every 
$((g_0,\ldots,g_m),(h_0,\ldots,h_{m-1}))\in N$, we have $h_i=1$ . We prove the following preliminary
claims.

\smallskip
\noindent\textsc{Claim 1. }Let $i\in \{1,\ldots,m\}$ and let $\sigma\in \Delta$. Suppose that, for
every $(1,f_1,\ldots,f_m)\in N$, we have $f_i(\sigma)=1$. Then $G(i)$ and $H(i-1)$ hold.
\smallskip

\noindent 
Let $(1,f_1,\ldots,f_m)\in N$ and let $\mu \in \Delta$. Since $S$ is transitive on $\Delta$ and
$\pi$ is surjective, there exists $g\in L$ such that $\sigma^{ (g^{\pi})^{-1}} = \mu$. By
Lemma~\ref{silly3}, there exists $(f_1',\ldots,f_m')\in V^m$ with $(g,f_1',\ldots,f_m')\in A$. As 
$N\unlhd A$, 
$$(1,f_1,\ldots,f_m)^{(g,f_1',\ldots,f_m')}=(1,f_1'^{-1}f_1^gf_1',\ldots,f_m'^{-1}f_m^{g}f_m')\in
N.$$ 
By hypothesis, we have
$1=(f_i'^{-1}f_i^{g}f_i')(\sigma)=f_i'(\sigma)^{-1}f_i(\sigma^{(g^\pi)^{-1}})f_i'(\sigma)=f_i(\mu)$.
Since $\mu$ is an arbitrary element of $\Delta$ we obtain $f_i=1$. Since $(1,f_1,\ldots,f_m)$ was an
arbitrary element of $N$, it follows that $G(i)$ and $H(i-1)$ hold.~$_\blacksquare$
\smallskip

\smallskip
\noindent\textsc{Claim 2. }Let $i\in \{1,\ldots,m\}$. Then $G(i)\Longleftrightarrow   H(i-1)$.
\smallskip

\noindent 
Suppose that $G(i)$ holds. Applying Claim~1 with $\sigma=\delta$, we immediately obtain $H(i-1)$.
Conversely, if $H(i-1)$ holds then applying Claim~1 with some $\sigma\in\Delta\setminus\{\delta\}$,
we obtain $G(i)$.~$_\blacksquare$

\smallskip
\noindent\textsc{Claim 3. }Let $i\in \{0,\ldots,m-1\}$ and let $j$ be in the $R$-orbit of $i$. Then
$G(i)\Longrightarrow G(j)$ and $H(i)\Longrightarrow H(j)$.
\smallskip

\noindent 
Assume that $G(i)$ holds and let $n=((g_0,\ldots,g_m),(h_0,\ldots,h_{m-1}))\in N$. There exists
$r\in R$ such that $ir^{-1}=j$. Since $R\leq P_2$, $N$ is normalised by $R$ and $n^r\in N$.
By~$(\ref{LA})$, this implies that $((g_{0r^{-1}},g_{1 r^{-1}}, \dots, g_{m r^{-1}}),
(h_{0r^{-1}},\dots,h_{(m-1)r^{-1}}))\in N$. Since $G(i)$ holds, we have that $g_j=g_{ir^{-1}}=1$. As
$n$ was an arbitrary element of $N$, this shows that $G(j)$ holds. The proof that
$H(i)\Longrightarrow H(j)$ is essentially the same and is omitted.~$_\blacksquare$
\smallskip

\smallskip
\noindent\textsc{Claim 4. }$G(i)$ holds for every $i\in\{0,\ldots,m\}$.
\smallskip

\noindent We argue by contradiction and let $z$ be minimal in $\{0,\ldots,m\}$ such that $G(z)$ does
not hold. Since $N\leq M$, we have that $G(0)$ holds and thus $z\geq 1$. By Claim 2, we see that
$H(z-1)$ does not hold.

Let $\mathcal{O}$ be the $R$-orbit on $\{0,\ldots,m\}$ containing $z$. By the minimality of $z$ and
Claim~3, we get that $z$ is the minimum of $\mathcal{O}$. By examining the orbits of $R$ on
$\{0,\ldots,m\}$, we see that this implies that $z-1$ and $z-2$ are in the same $R$-orbit. Since
$H(z-1)$ does not hold, Claim 3 implies that neither does $H(z-2)$. By Claim 2, neither does
$G(z-1)$, contradicting the minimality of $z$. ~$_\blacksquare$
\smallskip

Claim 2 together with Claim 4 implies that $H(i)$ holds for every $i\in\{0,\ldots,m-1\}$ and thus
$N=1$. This concludes the proof.
\end{proof}

Finally, we have $|B|\geq |C| \geq |M|=|K_{\lambda}|^{|\Delta|m}$, where the last equality follows
by Lemma~\ref{lemma:yes}. Recall that $m=\ell m_2$. Since $K$ is not semiregular, we have
$|K_{\lambda}|\geq 2$ and thus $|B|\to\infty$ as  $\ell\to\infty$. This concludes the proof of
Theorem~\ref{theo:main}.

\section{Proof of Theorem~\ref{theo:mainmain}}\label{sec4}
As in the previous section, all groups considered are finite. Suppose first that $L_1,\ldots,L_k$
are regular permutation groups and let $\mathcal{A}=\bigcup_{i=1}^kP_i$ be a rank $k$ faithful
amalgam of permutation type $[L_1,\ldots,L_k]$.  Since $L_i$ is regular, we have
$\mathcal{B}(\mathcal{A})\unlhd P_i$ for every $i\in\{1,\ldots,k\}$. As $\mathcal{A}$ is faithful,
this implies that $\mathcal{B}(\mathcal{A})=1$.  This proves the implication $(2)\Longrightarrow
(1)$ of Theorem~\ref{theo:mainmain}. 

We now turn to the proof of the implication $(1)\Longrightarrow (2)$. The following lemma will be
needed.

\begin{lemma}\label{basic}
Let $H$ and $K$ be transitive permutation groups on $\Delta$ and $\Lambda$, respectively. Let
$\delta_0\in \Delta$, $\lambda_0\in \Lambda$ and let $\ell$ be a positive integer. If
$|\Delta|,|\Lambda|\geq 2$ then there exist a set $\Omega$ of cardinality $\ell |\Delta||\Lambda|$,
faithful group actions  $\rho_H:H\to \Sym(\Omega)$ and $\rho_K:K\to \Sym(\Omega)$, and $\omega\in
\Omega$ such that
\begin{enumerate}
\item $\rho_H(H)_\omega=\rho_H(H_{\delta_0})$ and $\rho_K(K)_\omega=\rho_K(K_{\lambda_0})$;
\item $\langle \rho_H(H),\rho_K(K)_\omega\rangle=\rho_H(H)\times\rho_K(K)_\omega$ and $\langle
\rho_H(H)_\omega,\rho_K(K)\rangle=\rho_H(H)_\omega\times\rho_K(K)$; \label{newlabel}
\item $\langle \rho_H(H),\rho_K(K)\rangle$ is transitive on $\Omega$.
\end{enumerate}
\end{lemma}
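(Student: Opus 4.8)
The plan is to construct the set $\Omega$ and the two actions concretely, so that the required algebraic properties can be read off directly. Since $H$ and $K$ are transitive of degrees $|\Delta|,|\Lambda|\geq 2$, I would take $\Omega$ to be $\ell$ disjoint copies of the product $\Delta\times\Lambda$, that is $\Omega=\Delta\times\Lambda\times\{1,\dots,\ell\}$, which has the required cardinality $\ell|\Delta||\Lambda|$. The key idea is to make $H$ act \emph{only} on the $\Delta$-coordinate and $K$ act \emph{only} on the $\Lambda$-coordinate: define $\rho_H(h)$ by $(\delta,\lambda,z)\mapsto(\delta^h,\lambda,z)$ and $\rho_K(k)$ by $(\delta,\lambda,z)\mapsto(\delta,\lambda^k,z)$. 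With this definition the two images commute elementwise, which is what makes the direct-product conditions in~\eqref{newlabel} plausible. I would fix $\omega=(\delta_0,\lambda_0,1)$.

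The first verifications are routine: $\rho_H$ and $\rho_K$ are homomorphisms, and each is faithful because $H$ and $K$ are already faithful on $\Delta$ and $\Lambda$ respectively (an element acting trivially on every copy acts trivially on the relevant coordinate). For part~(1), $\rho_H(h)$ fixes $\omega=(\delta_0,\lambda_0,1)$ exactly when $\delta_0^h=\delta_0$, i.e.\ $h\in H_{\delta_0}$, giving $\rho_H(H)_\omega=\rho_H(H_{\delta_0})$, and symmetrically for $K$. For part~(2), the commuting of $\rho_H(H)$ with $\rho_K(K)$ makes each of the two subgroups generated a product; I would check that the relevant intersections are trivial (for instance $\rho_H(H)\cap\rho_K(K)_\omega=1$, since a nonidentity element of $\rho_H(H)$ moves some $\Delta$-coordinate while every element of $\rho_K(K)$ fixes all $\Delta$-coordinates) to upgrade these to \emph{internal} direct products, as the notation $\times$ demands.

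The one genuinely nontrivial point is part~(3): the full group $\langle\rho_H(H),\rho_K(K)\rangle=\rho_H(H)\times\rho_K(K)$ must be transitive on $\Omega$, and with the naive construction above it is \emph{not}, because nothing mixes the $\ell$ copies. This is the main obstacle, and it forces a modification of the construction. I would fix it by composing the $K$-action with a cyclic shift of the copy-index that depends on the $\Lambda$-coordinate: redefine $\rho_K(k)$ so that it sends $(\delta,\lambda,z)$ to $(\delta,\lambda^k,z+t(\lambda,k))$ for a suitable shift function $t$ valued in $\mathbb{Z}/\ell\mathbb{Z}$, chosen so that repeatedly applying elements of $K$ cycles through all $\ell$ copies while still defining a homomorphism. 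Concretely, since $|\Lambda|\geq 2$ one can pick two points and a group element moving one to the other and use it as a ``carry'' to advance the copy-index. I must then recheck that this modified $\rho_K$ remains a faithful homomorphism, still commutes with $\rho_H(H)$ (which only touches the $\Delta$-coordinate and so is unaffected by the $(\lambda,z)$-bookkeeping), and still fixes $\omega$ precisely on $K_{\lambda_0}$ after arranging $t(\lambda_0,\cdot)=0$; with these in hand, transitivity follows since $\rho_H(H)$ is transitive on the $\Delta$-coordinate, $\rho_K(K)$ is transitive on the $\Lambda$-coordinate, and the shift makes the combined action transitive on the copy-index, yielding transitivity on all of $\Omega=\Delta\times\Lambda\times\{1,\dots,\ell\}$.
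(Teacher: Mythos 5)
Your starting point coincides with the paper's: $\Omega=\Delta\times\Lambda\times\mathbb{Z}_\ell$, $H$ acting on the first coordinate, $K$ naively on the second, $\omega=(\delta_0,\lambda_0,\cdot)$, and you correctly identify that the naive construction fails condition (3). However, the repair you propose cannot work. You insist that the shift $t(\lambda,k)$ of the copy-index does not involve the $\Delta$-coordinate, precisely so that the modified $\rho_K(K)$ \emph{still commutes elementwise} with $\rho_H(H)$. That elementwise commutation is incompatible with conditions (1) and (3) holding simultaneously when $\ell\geq 2$. Concretely: if $\rho_H(H)$ and $\rho_K(K)$ commute, then $G:=\langle\rho_H(H),\rho_K(K)\rangle=\rho_H(H)\rho_K(K)$, so the orbit of $\omega$ is $\bigl\{\omega^{\rho_H(h)\rho_K(k)} : h\in H,\ k\in K\bigr\}=\Delta\times\{(\lambda_0,1)^k : k\in K\}$. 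For $\rho_K$ to be a homomorphism your $t$ must satisfy the cocycle identity $t(\lambda,kk')=t(\lambda,k)+t(\lambda^k,k')$, so $t(\lambda_0,\cdot)$ restricts to a homomorphism $K_{\lambda_0}\to\mathbb{Z}_\ell$; your requirement $\rho_K(K)_\omega=\rho_K(K_{\lambda_0})$ forces this homomorphism to vanish, whence the $K$-orbit of $(\lambda_0,1)$ in $\Lambda\times\mathbb{Z}_\ell$ has size exactly $|K:K_{\lambda_0}|=|\Lambda|$. The orbit of $\omega$ under $G$ therefore has size $|\Delta||\Lambda|<\ell|\Delta||\Lambda|=|\Omega|$, and (3) fails. (This is also why condition (2) of the lemma is deliberately weaker than ``$\langle\rho_H(H),\rho_K(K)\rangle$ is a direct product'': the latter statement is actually false for any construction satisfying (1) and (3) with $\ell\geq 2$.)

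The missing idea, which is the crux of the paper's proof, is that the shift \emph{must} depend on the $\Delta$-coordinate, so that full commutation is broken while the two weaker commutation statements in (2) survive. The paper takes the single permutation $g$ that cyclically advances the copy-index on the fibre $\{\delta_0\}\times\{\lambda_0\}\times\mathbb{Z}_\ell$ and fixes every other point, and sets $\rho_K(k)=g^{-1}\rho_K'(k)g$ where $\rho_K'$ is your naive action. Then $\rho_K(k)=\rho_K'(k)$ for $k\in K_{\lambda_0}$, so $\rho_K(K)_\omega=\rho_K(K_{\lambda_0})$ is still centralised by $\rho_H(H)$; and $\rho_H(H_{\delta_0})=\rho_H(H)_\omega$ commutes with $g$ (it fixes the fibre over $(\delta_0,\lambda_0)$ pointwise and preserves its complement), hence centralises $\rho_K(K)$ --- exactly the two halves of (2) and nothing more. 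Transitivity now genuinely uses both groups: $\rho_H(H)\rho_K(K)\rho_H(H)$ already covers each level $\Delta\times\Lambda\times\{i\}$, and any $k\in K\setminus K_{\lambda_0}$ sends $(\delta_0,\lambda_0,i)$ to $(\delta_0,\lambda_0^k,i-1)$, linking the levels. You could salvage your write-up by allowing $t$ to depend on $\delta$ and supporting it on the fibre over $(\delta_0,\lambda_0)$ only, but as written the proposal has a genuine gap: the constraint you impose to preserve commutation is exactly what makes transitivity unattainable.
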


\begin{proof}
Let $\Omega$ be the set $\Delta\times \Lambda\times\mathbb{Z}_\ell$ and let
$\omega=(\delta_0,\lambda_0,0)\in\Omega$. Let $g\in \Sym(\Omega)$ be defined by
$$
(\delta,\lambda,i)^g=\begin{cases}
(\delta,\lambda,i)&\textrm{if }\delta\neq \delta_0 \textrm{ or }\lambda\neq \lambda_0,\\
(\delta_0,\lambda_0,i+1)&\textrm{if }\delta=\delta_0 \textrm{ and }\lambda=\lambda_0.\end{cases}
$$
Define $\rho_{H}:H\to\Sym(\Omega)$ by setting $(\delta,\lambda,i)^{\rho_H(h)}=(\delta^h,\lambda,i)$
for every $h\in H$. Similarly, define $\rho_K,\rho_{K}':K\to \Sym(\Omega)$ by setting 
$(\delta,\lambda,i)^{\rho_K'(k)}=(\delta,\lambda^k,i)$ and ${\rho_K(k)}=g^{-1}\rho_K'(k)g$ for every
$k\in K$. It is easy to check that $\rho_H$ and $\rho_K$ define faithful group actions of $H$ and
$K$ on $\Omega$. A simple computation shows that $\rho_H(H)_\omega=\rho_H(H_{\delta_0})$ and
$\rho_K(K)_\omega=\rho_K(K_{\lambda_0})$.

It is easy to check that if $k\in K_{\lambda_0}$ then $\rho_K(k)=\rho'_K(k)$. Since $\rho_H(H)$
centralises  $\rho_K'(K)$, it centralises
$\rho_K'(K_{\lambda_0})=\rho_K(K_{\lambda_0})=\rho_K(K)_\omega$. Similarly, since $H_{\delta_0}$
preserves $\{\delta_0\}$ and $\Delta\setminus\{\delta_0\}$ it follows that $\rho_K(K)$ centralises
$\rho_H(H_{\delta_0})=\rho_H(H)_\omega$. Clearly $\rho_H(H)\cap\rho_K(K)=1$ and hence
$(\ref{newlabel})$ is established.

As $H$ and $K$ are transitive, for every $(\delta,\lambda,i)\in\Omega$, we have 
\begin{eqnarray*}
(\delta,\lambda,i)^{\rho_H(H)\rho_K(K)\rho_H(H)}&=&(\Delta\times\{\lambda\}\times\{i\})^{\rho_K(K)\
rho_H(H)}\\
&\supseteq&((\Delta\setminus\{\delta_0\})\times\{\lambda\}\times\{i\})^{\rho_K(K)\rho_H(H)}\\
&=&((\Delta\setminus\{\delta_0\})\times \Lambda\times\{i\})^{\rho_H(H)}\\
&=& \Delta\times \Lambda\times\{i\}.
\end{eqnarray*}
On the other hand, if $k\in K\setminus K_{\lambda_0}$ then
$(\delta_0,\lambda_0,i)^{\rho_K(k)}=(\delta_0,\lambda_0^k,i-1)$. This shows that $\langle
\rho_H(H),\rho_K(K)\rangle$ is transitive on $\Omega$.
\end{proof}

Let $k$ be a positive integer with $k\geq 3$ and let $L_1,\ldots,L_k$ be nontrivial transitive
permutation groups. For $i\in\{1,\ldots,k\}$, let $m_i$ denote the degree of $L_i$ and denote by
$\{0,\ldots,m_i-1\}$ the set acted upon by $L_i$. (Note that $m_i\geq 2$ since $L_i$ is nontrivial.)
Without loss of generality, we may assume that $L_1$ is not regular and thus $V:=(L_1)_0\neq 1$.

Let $\ell$ be a positive integer. By Lemma~\ref{basic}, there exist faithful actions of $L_2$ and
$L_3$ on a set  $\Omega$ of cardinality $\ell m_2m_3$ with $\langle L_2,L_3\rangle$  transitive on
$\Omega$. Moreover, there exists $\omega_0\in \Omega$ such that $(L_2)_{\omega_0}=(L_2)_0$,
$(L_3)_{\omega_0}=(L_3)_0$, $\langle (L_2)_{\omega_0},L_3\rangle=(L_2)_{\omega_0}\times L_3$ and
$\langle L_2,(L_3)_{\omega_0}\rangle=L_2\times (L_3)_{\omega_0}$.

Let $U=\prod_{\omega\in \Omega}V_\omega$ that is, $U$ is the direct product of $|\Omega|$ copies of
$V$, with the copies indexed by $\Omega$. Observe that the action of $\langle L_2, L_3\rangle$ on
$\Omega$ gives rise to a natural group action of $\langle L_2,L_3\rangle$ on $U$ which enables us to
construct the group $U\rtimes \langle L_2,L_3\rangle$. Let $U'=\prod_{\omega\in
\Omega\setminus\{\omega_0\}}V_\omega$, viewed as a subgroup of $U$ in the natural way. Note that, by
the previous paragraph, $(L_2)_{0}\times(L_3)_{0}$ normalises $U'$. Now, consider the following
abstract groups:  
\begin{eqnarray*}
P_1&:=& L_1\times \left(U'\rtimes ((L_2)_{0}\times(L_3)_{0})\right)\times (L_4)_0\times\cdots \times
(L_{k-1})_0\times (L_k)_{0},\\
P_2&:=& \left(U\rtimes (L_2\times (L_3)_{0})\right)\times (L_4)_0\times\cdots \times
(L_{k-1})_0\times (L_k)_{0},\\
P_3&:=& \left(U\rtimes ((L_2)_{0}\times L_3)\right)\times (L_4)_0\times\cdots \times
(L_{k-1})_0\times (L_k)_{0},\\
P_4&:=& \left(U\rtimes ((L_2)_{0}\times (L_3)_{0})\right)\times L_4\times\cdots \times
(L_{k-1})_0\times (L_k)_0,\\
&\vdots&\\
P_{k-1}&:=& \left(U\rtimes( (L_2)_{0}\times(L_3)_{0})\right)\times (L_4)_0\times\cdots \times
L_{k-1}\times (L_k)_0,\\
P_k&:=& \left(U\rtimes ((L_2)_{0}\times (L_3)_{0})\right)\times (L_4)_0\times\cdots \times
(L_{k-1})_0\times L_k,\\
B&:=&\left(U\rtimes ((L_2)_{0}\times(L_3)_{0})\right)\times (L_4)_0\times\cdots \times
(L_{k-1})_0\times (L_k)_0.
\end{eqnarray*}
Observe that, for every $i\in\{1,\ldots, k\}$, there is an obvious embedding of $B$ in  $P_i$. (For
$i=1$, this is because $U=V\times U'\leq L_1\times U'$.) Hence, in what follows, we regard $B$ as a
common subgroup of $P_1,\ldots,P_k$. Let $\mathcal{A}=\bigcup_{i=1}^kP_i$. Thus $\mathcal{A}$ is a
rank $k$ amalgam of the groups $P_1,\ldots,P_k$ with $\mathcal{B}(\mathcal A)=B$. 

\begin{lemma}\label{new}
The permutation type of $\mathcal{A}$ is  $[L_1,\ldots,L_k]$.
\end{lemma}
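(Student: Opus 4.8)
The plan is to compute, for each index $i \in \{1,\dots,k\}$, the permutation group induced by $P_i$ on the right cosets of $B = \mathcal{B}(\mathcal{A})$ and to verify that it is permutation isomorphic to $L_i$. Since $\mathcal{A}$ is the amalgam of the $P_i$ over their common Borel subgroup $B$, by definition this amounts to determining $P_i / \mathrm{core}_{P_i}(B)$ together with its action on $P_i/B$. The key structural observation is that each $P_i$ and $B$ share the same shape as a product of a ``variable'' factor and a block of ``frozen'' point-stabiliser factors, so in every case $B$ is obtained from $P_i$ by replacing exactly one transitive factor $L_j$ by its point-stabiliser $(L_j)_0$ (the case $i=1$ and the cases $i \geq 4$), or by replacing a factor inside the piece $U \rtimes (\cdots)$ (the cases $i=2,3$).

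For the generic factors $i \in \{4,\dots,k\}$, the analysis is the most transparent: here $P_i = (U \rtimes ((L_2)_0 \times (L_3)_0)) \times (L_4)_0 \times \cdots \times L_i \times \cdots \times (L_k)_0$ and $B$ is obtained by replacing the $L_i$ factor with $(L_i)_0$, while all other direct factors coincide. Hence the action of $P_i$ on $P_i/B$ factors through the projection onto the $L_i$ coordinate, and the induced permutation group is the action of $L_i$ on $L_i/(L_i)_0$, which is $L_i$. The case $i=1$ is analogous once one uses the identification $U = V \times U'$ noted in the excerpt, so that the $L_1$ factor of $P_1$ meets $B$ in $(L_1)_0 = V$ (sitting inside the $U$ of $B$ as the $\omega_0$-copy), and the induced action is that of $L_1$ on $L_1/(L_1)_0$, namely $L_1$.

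The cases $i=2$ and $i=3$ are the ones requiring real care and I expect them to be the main obstacle. Here the relevant factor is $U \rtimes (L_2 \times (L_3)_0)$ inside $P_2$ (respectively $U \rtimes ((L_2)_0 \times L_3)$ inside $P_3$), while $B$ contains $U \rtimes ((L_2)_0 \times (L_3)_0)$, so the quotient is governed by the $L_2$ factor (respectively $L_3$) but now $U$ is also being acted upon nontrivially. The plan is to show that the induced permutation group on $P_2/B$ is still permutation isomorphic to $L_2$: one checks that the stabiliser of the trivial coset is exactly $B$, and that the orbit structure matches $L_2/(L_2)_0$. The property from Lemma~\ref{basic}, namely that $\langle L_2, (L_3)_0 \rangle = L_2 \times (L_3)_0$ and that $(L_3)_0$ together with the relevant part of $U$ behaves compatibly, is precisely what guarantees that the ``$U$-part'' does not interfere: the transversal of $B$ in $P_2$ can be taken from a transversal of $(L_2)_0$ in $L_2$, and the semidirect factor $U$ is absorbed into the core.

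Concretely, for $i=2$ I would identify $\mathrm{core}_{P_2}(B)$ and verify that $P_2 / \mathrm{core}_{P_2}(B)$ acts on $P_2/B$ as $L_2$ acts on its coset space $L_2/(L_2)_0$; the subtlety is confirming that the normal closure computation is not spoiled by the action of $L_2$ on $U$, which is where the transitivity and centralising properties from Lemma~\ref{basic} are used. Once the six (or $k$) cases are settled, collecting the permutation isomorphisms $P_i^{P_i/B} \cong L_i$ yields that the permutation type of $\mathcal{A}$ is the multiset $[L_1,\dots,L_k]$, completing the proof.
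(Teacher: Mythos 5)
Your proposal is correct and takes the same route as the paper, whose entire proof is the one-line assertion that the claim is ``immediate from the definitions''; your case-by-case verification (the projection argument for $i\geq 4$, the identification $U=V_{\omega_0}\times U'$ for $i=1$, and the use of the direct-product decompositions $\langle L_2,(L_3)_{\omega_0}\rangle=L_2\times (L_3)_{\omega_0}$ and $(L_j)_{\omega_0}=(L_j)_0$ from Lemma~\ref{basic} for $i=2,3$) is precisely the routine computation the paper leaves to the reader.
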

\begin{proof}
For every $i\in \{1,\ldots,k\}$, it is immediate from the definitions that the permutation group
induced by the action of $P_i$ on the right cosets of $B$ in $P_i$ is permutation isomorphic to
$L_i$. 
\end{proof}

\begin{lemma}\label{newnew}
The amalgam $\mathcal{A}$ is faithful.
\end{lemma}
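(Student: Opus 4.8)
The plan is to take an arbitrary subgroup $N\leq B$ that is normalised by each of $P_1,\ldots,P_k$ and to show, by clearing the coordinates one family at a time, that $N=1$. The single tool used repeatedly is the elementary fact that a faithful transitive permutation group has a core-free point stabiliser; concretely, $\mathrm{core}_{L_i}((L_i)_0)=1$ for every $i$. Since $B\leq P_i$, the subgroup $N$ is in particular normal in $B$, so throughout I may freely conjugate elements of $N$ by elements of any $P_i$ and use that the result still lies in $N\leq B$.

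First I would exploit $P_4,\ldots,P_k$. Fix $i\in\{4,\ldots,k\}$ and write a generic element of $N$ according to the direct decomposition of $B$; its $i$-th coordinate lies in $(L_i)_0$. Conjugating by the full factor $L_i\leq P_i$ (which commutes with every other factor of $P_i$ and with $U\rtimes((L_2)_0\times(L_3)_0)$) leaves all other coordinates unchanged and sends this coordinate to an arbitrary $L_i$-conjugate of itself. Because the conjugate must again lie in $B$, hence have its $i$-th coordinate in $(L_i)_0$, that coordinate lies in $\mathrm{core}_{L_i}((L_i)_0)=1$. This gives $N\leq W$, where $W:=U\rtimes((L_2)_0\times(L_3)_0)$. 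The identical argument, now conjugating by $L_2\leq P_2$ and by $L_3\leq P_3$, forces the $(L_2)_0$- and $(L_3)_0$-coordinates of each element of $N$ into $\mathrm{core}_{L_2}((L_2)_0)=1$ and $\mathrm{core}_{L_3}((L_3)_0)=1$ respectively; conjugation by these factors only permutes the coordinates of $U$ and conjugates within $(L_2)_0\times(L_3)_0$, so no new obstruction arises. Hence $N\leq U$.

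Next I would use $P_1$ to clear the distinguished coordinate. Since $(L_2)_0=(L_2)_{\omega_0}$ and $(L_3)_0=(L_3)_{\omega_0}$ both fix $\omega_0$, the group $(L_2)_0\times(L_3)_0$ fixes $\omega_0$ and therefore centralises the factor $V_{\omega_0}$ of $U$ while permuting the remaining factors; this is precisely what makes the decomposition $W=V_{\omega_0}\times(U'\rtimes((L_2)_0\times(L_3)_0))$ and the embedding $B\hookrightarrow P_1$ legitimate, with $V_{\omega_0}$ identified with the point stabiliser $(L_1)_0\leq L_1$. For $n\in N\leq U$ I would conjugate by the factor $L_1\leq P_1$: this fixes the $U'$-part and sends the $V_{\omega_0}$-coordinate to an arbitrary $L_1$-conjugate, which must remain in $(L_1)_0$, so the $V_{\omega_0}$-projection of $N$ lies in $\mathrm{core}_{L_1}((L_1)_0)=1$. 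Finally, $N\leq U$ is normalised by $L_2\leq P_2$ and by $L_3\leq P_3$, whose actions on $U$ merely permute the coordinates; thus $N$ is invariant under $\langle L_2,L_3\rangle$, which by Lemma~\ref{basic} acts transitively on the index set $\Omega$. Transitivity carries the vanishing of the $V_{\omega_0}$-projection to the vanishing of the $V_\omega$-projection for every $\omega\in\Omega$, and since $N\leq\prod_{\omega\in\Omega}V_\omega$ then has all coordinate projections trivial, $N=1$.

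The routine parts are the semidirect- and direct-product bookkeeping in the first two steps. The step requiring the most care is the use of $P_1$: one must verify that $(L_2)_0\times(L_3)_0$ genuinely fixes $\omega_0$ and hence centralises $V_{\omega_0}$, so that the coordinate $V_{\omega_0}$ of $U$ really is absorbed into $L_1$ as its (core-free) point stabiliser under the embedding $B\hookrightarrow P_1$. This non-obvious identification is the crux, after which the transitivity of $\langle L_2,L_3\rangle$ on $\Omega$ does the remaining work essentially for free.
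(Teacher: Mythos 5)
Your proof is correct and follows essentially the same route as the paper: the paper computes the cores $K_i$ of $B$ in each $P_i$ and observes $N\leq\bigcap_i K_i=U'$, which is exactly what your coordinate-clearing conjugation arguments establish (core-freeness of each point stabiliser $(L_i)_0$ in $L_i$ being the reason each $K_i$ kills the corresponding factor), and then both arguments finish by transporting the triviality of the $V_{\omega_0}$-projection around $\Omega$ via the transitivity of $\langle L_2,L_3\rangle$. Your observation that $(L_2)_0\times(L_3)_0$ fixes $\omega_0$ and hence centralises $V_{\omega_0}$ is indeed the point that legitimises the embedding $B\hookrightarrow P_1$, and is handled correctly.
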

\begin{proof}
Let $N$ be a subgroup of $B$ with $N\unlhd P_i$ for every $i\in \{1,\ldots,k\}$. Let $K_i$ denote
the core of $B$ in $P_i$. Clearly, we have 
\begin{eqnarray*}
K_1&=& \left(U'\rtimes ((L_2)_{0}\times(L_3)_{0})\right)\times (L_4)_0\times\cdots \times
(L_{k-1})_0\times (L_k)_{0},\\
K_2&=& \left(U\rtimes (1\times (L_3)_{0})\right)\times (L_4)_0\times\cdots \times (L_{k-1})_0\times
(L_k)_{0},\\
K_3&=& \left(U\rtimes ((L_2)_{0}\times 1)\right)\times (L_4)_0\times\cdots \times (L_{k-1})_0\times
(L_k)_{0},\\
&\vdots&\\
K_k&=& \left(U\rtimes ((L_2)_{0}\times (L_3)_{0})\right)\times (L_4)_0\times\cdots \times
(L_{k-1})_0\times 1,\\
\end{eqnarray*}
and thus $N\leq \bigcap_{i=1}^kK_i= U'$. Let $n\in N$. As $N\leq U$, we may write
$n=\prod_{\omega\in \Omega} n_\omega$ and, since $N\leq U'$, we have $n_{\omega_0}=1$.  Let
$\omega\in \Omega$. Since $\langle L_2,L_3\rangle$ is transitive on $\Omega$, there exists $x\in
\langle L_2,L_3\rangle$ with $\omega^x=\omega_0$. Recall that $\langle L_2,L_3\rangle\leq \langle
P_2,P_3\rangle$ hence $n^x\in N$ therefore $(n^x)_{\omega_0}=1$. On the other hand
$(n^x)_{\omega_0}=n_{\omega_0^{x^{-1}}}=n_{\omega}$. Since this holds for every $\omega\in\Omega$
and every $n\in N$, we have $N=1$ and thus $\mathcal{A}$ is faithful.
 \end{proof}

We have that $|\mathcal{B}(\mathcal A)|=|B|\geq |U|=|V|^{|\Omega|}=|(L_1)_0|^{\ell m_2m_3}$. Since
$L_1$ is not regular, we have $|(L_1)_0|\geq 2$ and thus $|\mathcal{B}(\mathcal A)|\to\infty$ as 
$\ell\to\infty$. This concludes the proof of Theorem~\ref{theo:mainmain}.

\end{document}